\renewcommand{\phi}{\varphi}
\renewcommand{\epsilon}{\varepsilon}
\renewcommand{\theta}{\vartheta}
\def\ZZ{{\mathbf Z}}
\def\CC{{\mathbf C}}
\def\RR{{\mathbf R}}
\def\QQ{{\mathbf Q}}
\def\cJ{\mathcal{J}}
\def\cF{\mathcal{F}}
\def\cG{\mathcal{G}}
\def\cO{\mathcal{O}}
\def\fra{\mathfrak{a}}
\def\frb{\mathfrak{b}}
\def\frm{\mathfrak{m}}
\def\frn{\mathfrak{n}}
\def\frp{\mathfrak{p}}
 \DeclareMathOperator{\Spec}{Spec}
 \DeclareMathOperator{\lct}{lct}
 \DeclareMathOperator{\fpt}{fpt}
\newcommand{\llbracket}{[\negthinspace[}
\newcommand{\rrbracket}{]\negthinspace]}
\newtheorem{lemma}{Lemma}[section]
\newtheorem{theorem}[lemma]{Theorem}
\newtheorem{corollary}[lemma]{Corollary}
\newtheorem{proposition}[lemma]{Proposition}
\theoremstyle{definition}
\newtheorem{definition}[lemma]{Definition}
\newtheorem{remark}[lemma]{Remark}
\newtheorem{problem}[lemma]{Problem}
\newtheorem{example}[lemma]{Example}
\theoremstyle{remark}
\newtheorem*{remark*}{Remark}
\newtheorem*{note*}{Note}
\begin{document}

\title{Estimates for $F$-jumping numbers and bounds for Hartshorne-Speiser-Lyubeznik
numbers}

\thanks{2010\,\emph{Mathematics Subject Classification}.
 Primary 13A35; Secondary 14B15, 14F18.
\newline The first author was partially supported by
 NSF grant DMS-1068190 and
  a Packard Fellowship.
  \newline The second author was partially supported by NSF grant DMS-1068946.}
\keywords{Test ideals, $F$-jumping numbers, $F$-pure threshold, local cohomology}

\author[M.~Musta\c{t}\u{a}]{Mircea~Musta\c{t}\u{a}}
\address{Department of Mathematics, University of Michigan,
Ann Arbor, MI 48109, USA}
\email{mmustata@umich.edu}

\author[W.~Zhang]{Wenliang~Zhang}
\address{Department of Mathematics, University of Michigan,
Ann Arbor, MI 48109, USA}

\curraddr{Department of Mathematics, University of Nebraska, Lincoln, NE 68588-0130,
USA}
\email{wzhang15@unl.edu}

\begin{abstract}
Given an ideal $\fra$ on a smooth variety in characteristic zero, we estimate
the $F$-jumping numbers of the reductions of $\fra$ to positive characteristic in terms
of the jumping numbers of $\fra$ and the characteristic. We apply one of our estimates
to bound the Hartshorne-Speiser-Lyubeznik invariant for the reduction to positive characteristic of a hypersurface singularity.
\end{abstract}

\maketitle

\markboth{M.~MUSTA\c{T}\u{A} AND W.~ZHANG}{ESTIMATES FOR $F$-JUMPING 
NUMBERS AND BOUNDS FOR HSL NUMBERS}

\section{Introduction}
Let $\fra$ be a nonzero ideal on a 
smooth, irreducible variety $X$ over an algebraically closed field $k$. 
A fundamental invariant of the singularities of the subscheme defined by $\fra$ is the
\emph{log canonical threshold} $\lct(\fra)$. This can be defined in terms of either divisorial valuations or, when working over the complex numbers,
integrability conditions.
On the other hand, by considering models for $X$ and $\fra$
over a $\ZZ$-algebra of finite type $A\subset k$, one can take reductions $X_s$ and 
$\fra_s$
to positive characteristic, for all closed points $s\in\Spec A$. 
Using the Frobenius morphism, Takagi and Watanabe defined in 
\cite{TW} an analogue of the log canonical threshold
in this setting, the $F$-\emph{pure threshold} $\fpt(\fra_s)$. A problem that has attracted a lot of interest is the relation between $\lct(\fra)$ and $\fpt(\fra_s)$. 

It follows from the work of Hara and Yoshida \cite{HY} that after possibly replacing $A$ by a localization $A_a$, we may assume that $\lct(\fra)\geq\fpt(\fra_s)$ for all closed points 
$s\in\Spec A$. Moreover, for every $\epsilon>0$, there is an open subset $U_{\epsilon}
\subseteq\Spec A$
such that $\fpt(\fra_s)>\lct(\fra)-\epsilon$ for all $s\in U_{\epsilon}$. One can see that even in very simple examples, one can not take $U_{\epsilon}$ to be independent of $\epsilon$.
On the other hand, it is expected that there is a Zariski dense set of closed points 
$s\in\Spec A$ such that $\lct(\fra)=\fpt(\fra_s)$ (see \cite{MS}). 
As a consequence of our main results, we give an effective estimate for the difference between
the log canonical threshold of $\fra$ and the $F$-pure threshold of $\fra_s$
(see Corollaries~\ref{cor_lct} and \ref{cor2_thm_main2} below).

\noindent{\bf Theorem A}. With the above notation, after possibly replacing $A$ by a localization 
$A_a$, the following hold:
\begin{enumerate}
\item[(i)] There is $C>0$ such that $\lct(\fra)-\fpt(\fra_s)\leq
\frac{C}{{\rm char}(k(s))}$ for every closed point $s\in \Spec A$.
\item[(ii)] Assuming that $\fra$ is locally principal, there is a positive integer $N$ such that
$$\lct(\fra)-\fpt(\fra_s)\geq \frac{1}{{\rm char}(k(s))^N}$$ for every closed point
$s\in\Spec A$ for which $\fpt(\fra_s)\neq\lct(\fra)$.
\end{enumerate}

In fact, we prove similar estimates for the higher jumping numbers, that we now describe.
Recall that
if $\fra$ and $X$ are as above, then one associates to $\fra$ and to every
$\lambda\in\RR_{\geq 0}$ the multiplier ideal $\cJ(\fra^{\lambda})$ of $\cO_X$. 
These ideals have found a lot of applications in the study of higher-dimensional varieties,
due to the fact that they measure the singularities of the subscheme defined by $\fra$ in a way 
that is relevant to vanishing theorems (see \cite[\S 9]{positivity}). 
The multiplier ideals can be defined using either divisorial valuations, or  integrability conditions (when we work over $\CC$). All multiplier ideals can be computed from a log resolution
of $\fra$,
and this description immediately implies that there is an unbounded sequence of positive rational numbers $\lambda_1<\lambda_2<\ldots$ such that 
$$\cJ(\fra^{\lambda})=\cJ(\fra^{\lambda_i})\supsetneq
\cJ(\fra^{\lambda_{i+1}})\,\,\text{for all}\,\,i\geq 0\,\,\text{and all}\,\,
\lambda\in [\lambda_i,\lambda_{i+1})$$
(with the convention $\lambda_0=0$). The rational numbers $\lambda_i$, with  $i\geq 1$,
are the \emph{jumping numbers} of $\fra$. The smallest such number $\lambda_1$ can be described as
the smallest $\lambda$ such that $\cJ(\fra^{\lambda})\neq\cO_X$; this is the
the log canonical threshold $\lct(\fra)$.

Suppose now that we choose models of $X$ and $\fra$ over $A$ as before, and we consider
the reduction $\fra_s$, where $s\in\Spec A$ is a closed point. 
Hara and Yoshida introduced in \cite{HY}
the (generalized) \emph{test ideals} $\tau(\fra_s^{\lambda})$. While giving an analogue 
of multiplier ideals in the positive characteristic setting, they are defined by very different methods (the original definition in \cite{HY} involves a generalization of the theory of tight closure, due to Hochster and Huneke). One can show that in this case, too, there
is an unbounded, strictly increasing sequence of positive rational numbers 
$\alpha_i=\alpha_i(s)$ for $i\geq 0$, with $\alpha_0=0$, such that 
$$\tau(\fra^{\lambda})=\tau(\fra^{\alpha_i})\supsetneq\tau(\fra^{\alpha_{i+1}})\,\,
\text{for all}\,\,i\geq 0\,\,\text{and all}\,\,\lambda\in [\alpha_i,\alpha_{i+1}).$$
The rational numbers $\alpha_i$, with $i\geq 1$, are the $F$-\emph{jumping
numbers} of $\fra$. The smallest $F$-jumping number $\alpha_1$ can be described as the smallest $\lambda$
such that $\tau(\fra_s^{\lambda})\neq\cO_{X_s}$; this is the $F$-pure threshold
$\fpt(\fra_s)$.
We mention that unlike in the case of multiplier ideals, both the
rationality of the $\alpha_i$, and the fact that they are unbounded, is nontrivial
(see \cite{BMS2}). 

The comparison between $\lct(\fra)$ and $\fpt(\fra_s)$ comes from a relation
between the multiplier ideals of $\fra$ and the test ideals of $\fra_s$, proved in 
\cite{HY}. This says that after possibly replacing $A$ by a localization $A_a$, we may assume that
$$\tau(\fra_s^{\lambda})\subseteq\cJ(\fra^{\lambda})_s$$
for all closed points $s\in\Spec A$. Furthermore, given any $\lambda\in\RR_{\geq 0}$,
there is an open subset $V_{\lambda}\subseteq\Spec A$ such that $\tau(\fra^{\lambda}_s)=
\cJ(\fra^{\lambda})_s$ for every closed point $s\in V_{\lambda}$. This set, in general, depends on $\lambda$. On the other hand,
it is expected that there is a Zariski dense set of closed points $s\in \Spec A$ such that
$\tau(\fra^{\lambda}_s)=
\cJ(\fra^{\lambda})_s$ for every $\lambda$ (see \cite{MS}). 
We can now state our main results concerning jumping numbers (see
 Theorem~\ref{thm_main1}, 
Theorem~\ref{thm_main2}, and Corollary~\ref{cor_thm_main2} below).

\noindent{\bf Theorem B}. 
With the above notation, given $\lambda\in\QQ_{>0}$, after possibly replacing $A$ by a localization $A_a$,  the following hold:
\begin{enumerate}
\item[(i)] There is $C>0$ such that for every closed point $s\in\Spec A$ with 
${\rm char}(k(s))=p_s$, we have
$$\cJ(\fra^{\lambda-\frac{C}{p_s}})_s=\tau(\fra_s^{\lambda-\frac{C}{p_s}}).$$
In particular, if $\lambda$ is a jumping number of $\fra$ and $\lambda'$ is the largest jumping number smaller than $\lambda$ (with the convention $\lambda'=0$ if 
$\lambda=\lct(\fra)$), then we may assume that for every 
$s$ as above, there is an $F$-jumping number $\mu\in (\lambda',\lambda]$ for
$\fra_s$, and for every such $\mu$, we have
$\lambda-\mu\leq\frac{C}{p_s}$.
\item[(ii)] Assuming that $\fra$ is locally principal, there is a positive integer $N$ such that 
for every $F$-jumping number $\mu<\lambda$ of $\fra_s$, we have
$\lambda-\mu\geq\frac{1}{p_s^N}$.
\end{enumerate}

We deduce the assertion in (ii) from the description of test ideals in \cite{BMS2} and an observation from \cite{BMS1}. The more involved assertion in (i) follows using the methods
introduced by Hara and Yoshida in \cite{HY}. The statements in Theorem~A then follow
by applying Theorem~B to $\lambda=\lct(\fra)$. 

We apply assertion (ii) in Theorem B to
Hartshorne-Speiser-Lyubeznik numbers, as follows. Recall that given a Noetherian local ring 
$(S,\frn)$ of characteristic $p>0$, a $p$-linear structure on an $S$-module $M$
is an additive map $\phi\colon M\to M$ such that $\phi(az)=a^p\phi(z)$ for all $a\in S$ and 
$z\in M$. If $M$ is Artinian, then by a theorem due to Hartshorne and
Speiser \cite{HS} and Lyubeznik \cite{Lyubeznik}, the non-decreasing sequence of 
$S$-submodules
$$N_i:=\{z\in M\mid\phi^i(z)=0\}\subseteq M$$
 is eventually stationary. The Hartshorne-Speiser-Lyubeznik
number of $(M,\phi)$ is the smallest $\ell$ such that $N_{\ell}=N_{\ell+j}$ for all $j\geq 1$. 

We are interested in the case when $S=R/(f)$, for a regular local ring $R$
of positive characteristic and a nonzero noninvertible $f\in R$. Let $d=\dim(S)$. In this case the injective hull
$E_S$ of $S/\frn$ over $S$ can be identified with the top cohomology
module $H^{d}_{\frn}(S)$, and therefore carries a canonical $p$-linear structure $\Theta$
induced by functoriality from the Frobenius action on $S$. If we are in a setting where
the test ideals of $R$ are defined (for example, when $R$ is essentially of finite type over
a perfect field), then the Hartshorne-Speiser-Lyubeznik number of
$(E_S,\Theta)$ is equal to the smallest positive integer $\ell$ such that
$$\tau(f^{1-\frac{1}{p^{\ell}}})=\tau(f^{1-\frac{1}{p^{\ell+j}}})$$
for every $j\geq 1$. 
If we are in the setting of Theorem~B (ii), we obtain the following
(see Theorem~\ref{thm_main3} below).

\noindent{\bf Theorem C}. If $X$, $\fra$ and $A$ are as in Theorem~B, with $\fra$
locally principal, and if $Z$ is the subscheme defined by $\fra$, then there is a positive 
integer $N$ such that for every closed point $s\in\Spec A$ and every point in the fiber
$Z_s$ of $Z$ over $s$, the Hartshorne-Speiser-Lyubeznik number of
$(E_{\cO_{Z_s,x}},\Theta)$ is bounded above by $N$.

We also give an example to illustrate that in the above theorem, 
even after possibly replacing $A$ by a localization, we can not take $N=1$
(this gives a negative answer to a question of M.~Katzman).

The paper is structured as follows. In \S 2 we recall the definitions of multiplier ideals
and test ideals, as well as the framework for reducing from characteristic zero to positive characteristic. In \S 3 we explain how to get upper bounds for the jumping numbers
of an ideal in positive characteristic. In particular, we prove part (ii) in Theorems~A and B.
In \S 4 we describe how to use the methods from \cite{HY} to get lower bounds for the
$F$-jumping numbers of the reductions to positive characteristic of an ideal defined in characteristic zero. This gives part (i) in Theorems~A and B. In \S 5 we discuss the
Hartshorne-Speiser-Lyubeznik numbers and their connection with $F$-jumping numbers.
The last section contains some examples.

\subsection*{Acknowledgment} 
Our project started during the AIM workshop on ``Relating test ideals and mutiplier ideals".
We would like to thank Karl Schwede and Kevin Tucker for organizing this event, and
AIM for providing a stimulating research environment. The second author would also like to thank Mordechai Katzman for helpful discussions during the AIM workshop.

\section{Review of multiplier ideals and test ideals}

In this section we recall the definitions of multiplier ideals and test ideals, and review the results
connecting these ideals via reduction mod $p$. For simplicity, we only consider the case of
smooth ambient algebraic varieties\footnote{While the results in \S 4 work more generally, for the
upper bounds in \S 3 we will need to restrict anyway to smooth ambient varieties.}.
We start by discussing the multiplier ideals
in characteristic zero.

\subsection{Multiplier ideals}
Let $X$ be a smooth scheme of finite type over an algebraically closed field $k$ of characteristic zero.
Suppose that $\fra$ is an ideal\footnote{All ideal sheaves are assumed to be coherent.}
of $\cO_X$ which is everywhere nonzero (in other words, its restriction to every connected component of $X$ is nonzero). A
\emph{log resolution} of $\fra$ is a projective birational morphism
$\pi\colon Y\to X$, with $Y$ smooth, such that $\fra\cdot\cO_Y$ is the ideal defining
a divisor $D$ on $Y$, and such that $D+K_{Y/X}$ is a simple normal crossing divisor.
Here $K_{Y/X}$ is the relative canonical divisor, an effective divisor supported on the exceptional 
locus of $\pi$, such that $\cO_Y(K_{Y/X})\simeq\omega_Y\otimes \pi^*(\omega_X)^{-1}$
(recall that for a smooth scheme $W$ over $k$, one denotes by $\omega_W$ the line bundle of
top differential forms on $W$).

Given such a log resolution, we define for every $\lambda\in\RR_{\geq 0}$
the \emph{multiplier ideal} of $\fra$ of exponent $\lambda$ by
$$\cJ(\fra^{\lambda}):=\pi_*\cO_Y(K_{Y/X}-\lfloor\lambda D\rfloor).$$
Here, for a divisor with real coefficients $F=\sum_i\alpha_iF_i$, we put
$\lfloor F\rfloor:=\sum_i\lfloor \alpha_i\rfloor F_i$, where $\lfloor u\rfloor$ denotes the largest integer $\leq u$.
Note that since $K_{Y/X}$ is effective and supported on the exceptional locus, we have
$\pi_*\cO_Y(K_{Y/X})=\cO_X$, hence $\cJ(\fra^{\lambda})$ is indeed an ideal of $\cO_X$. 

We now review some basic properties of multiplier ideals. For proofs of these facts, and for 
a detailed introduction to this topic, see \cite[\S 9]{positivity}. One can show that the definition 
is independent of the choice of log resolution. A few things are straightforward from definition:
if $\lambda<\mu$, then $\cJ(\fra^{\mu})\subseteq\cJ(\fra^{\lambda})$. Furthermore, 
for every $\lambda\in\RR_{\geq 0}$, there is $\epsilon>0$ such that 
$\cJ(\fra^{\lambda})=\cJ(\fra^{\mu})$ for every $\mu$ with $\lambda\leq\mu\leq\lambda+\epsilon$.
One says that $\lambda>0$ is a \emph{jumping number} of $\fra$ if
$\cJ(\fra^{\lambda})$ is strictly contained in $\cJ(\fra^{\mu})$ for every $\mu<\lambda$. 

It follows from definition that if $D=\sum_{i=1}^ra_iD_i$, then for every jumping number
$\lambda$ of $\fra$, there is $i$ such that $\lambda a_i$ is an integer. In particular,
we see that the set of jumping numbers of $\fra$ is a discrete set of rational numbers. 
As we have mentioned, $\cJ(\fra^0)=\cO_X$. The smallest jumping number is thus the smallest
$\lambda$ such that $\cJ(\fra^{\lambda})\neq\cO_X$. This is the
\emph{log canonical threshold} $\lct(\fra)$ of $\fra$. 

A result due to Ein and Lazarsfeld allows to reduce studying arbitrary multiplier ideals to those 
for which the exponent is
less than the dimension of $X$. This is Skoda's theorem (see \cite[\S 11.1.A]{positivity}), saying that if $\fra$ is locally generated by $r$ elements, then
$$\cJ(\fra^{\lambda})=\fra\cdot\cJ(\fra^{\lambda-1})\,\,\text{for}\,\,\lambda\geq r.$$
In particular, one can take $r=\dim(X)$. 

\subsection{Test ideals}
The (generalized) test ideals have been introduced by Hara and Yoshida in \cite{HY},
using a generalization of tight closure theory. Since we will only work on regular schemes, it is more convenient to use the alternative definition from 
\cite{BMS2}, that we now present. For proofs and more details, we refer to \emph{loc. cit}.

Let $X$ be a regular scheme of positive characteristic $p$. We denote by
$F\colon X\to X$ the absolute Frobenius morphism, which is the identity on the topological space, and given by the $p$-th power map on the sections of $\cO_X$. 
We assume that $X$ is $F$-\emph{finite}, that is, $F$ is a finite map
(note that $F$ is also flat since $R$ is regular).
This is satisfied, for example, if $X$ is a scheme of finite type over a perfect field,
a local ring of such a scheme, or the completion of such a ring.

Suppose first, for simplicity, that $X=\Spec R$, where $R$ is a regular $F$-finite domain. 
For an ideal $J$ in $R$ and for $e\geq 1$, one denotes by $J^{[p^e]}$
the ideal $(h^{p^e}\mid h\in J)$. Using the fact that $F$ is finite and flat one shows that given any ideal $\frb$ in $R$, there is a unique ideal $J$ such that $\frb\subseteq J^{[p^e]}$. We denote
this $J$ by $\frb^{[1/p^e]}$.

Suppose now that $\fra$ is a nonzero ideal in $R$ and $\lambda\in\RR_{\geq 0}$. 
It is not hard to see that for every $e\geq 1$ we have an inclusion
$$\left(\fra^{\lceil \lambda p^e\rceil}\right)^{[1/p^e]}
\subseteq\left(\fra^{\lceil\lambda p^{e+1}\rceil}\right)^{[1/p^{e+1}]}.$$
It follows from the Noetherian property that for $e\gg 0$ the ideal
$\left(\fra^{\lceil \lambda p^e\rceil}\right)^{[1/p^e]}$ is independent of $e$. This is the (generalized) 
\emph{test ideal} $\tau(\fra^{\lambda})$ of $\fra$ of exponent $\lambda$.
One can show that the construction of test ideals commutes with localization and completion.
In particular, we can extend the above definition to the general case when $X$ is a
regular $F$-finite scheme of positive characteristic and $\fra$ is an everywhere nonzero ideal;
 the test ideals $\tau(\fra^{\lambda})$
are coherent ideals of $\cO_X$.

The formal properties that we discussed for multiplier ideals also hold in this setting.
If $\lambda<\mu$, then $\tau(\fra^{\mu})\subseteq\tau(\fra^{\lambda})$. 
With a little effort (see \cite[Proposition~2.14]{BMS2}) one shows that for every
$\lambda\in\RR_{\geq 0}$ there is $\epsilon>0$ such that
$\tau(\fra^{\lambda})=\tau(\fra^{\mu})$ for every $\mu$ with 
$\lambda\leq\mu\leq\lambda+\epsilon$. 
A positive $\lambda$ is an $F$-\emph{jumping number} of $\fra$ if
$\tau(\fra^{\lambda})\neq\tau(\fra^{\mu})$ for every $\mu<\lambda$. 
The set of $F$-jumping numbers of $\fra$ is known to be a discrete set of rational numbers
when $X$ is essentially of finite type over a field (\cite[Theorem~3.1]{BMS2}) or
when $\fra$ is locally principal (\cite[Theorem~1.1]{BMS1}).
Note, however, that this
assertion is considerably more subtle that the corresponding one in characteristic zero.
One property that is special to characteristic $p$ says that if  $\lambda$ is an $F$-jumping number, then $p\lambda$ is an $F$-jumping number, too. 
 It follows from definition that $\tau(\fra^0)=\cO_X$, hence the first $F$-jumping number is the smallest $\lambda$ such that $\tau(\fra^{\lambda})\neq \cO_X$. This is the
$F$-\emph{pure threshold} $\fpt(\fra)$. 
We note that if $X=U_1\cup\ldots\cup U_m$ is an open cover, then
$\lambda$ is an $F$-jumping number of $\fra$ if and only if it is a jumping number
of one of the restrictions $\fra\vert_{U_i}$.

There is a version of Skoda's theorem also in this setting, and this is in fact more elementary than in the case of multiplier ideals (for a proof involving the above definition, see 
\cite[Proposition~2.25]{BMS2}). This says that if $\fra$ is locally generated by $r$ elements, then
$$\tau(\fra^{\lambda})=\fra\cdot\tau(\fra^{\lambda-1})\,\,\text{for}\,\,\lambda\geq r.$$
In particular, one can always take $r=\dim(X)$.

We end by mentioning a formula for computing ideals of the form $\frb^{[1/p^e]}$,
which we will use to compute examples in \S 6. Suppose that $X=\Spec R$, where
$R$ is a regular domain of characteristic $p>0$, such that $R$ has a basis over
$R^{p^e}$ given by $u_1,\ldots, u_r$. If the ideal $\frb$ in $R$ is generated by 
$h_1,\ldots,h_m$, and if we write
$h_i=\sum_{j=1}^ra_{i,j}^{p^e}u_j$, then
\begin{equation}\label{formula_test_ideal}
\frb^{[1/p^e]}=(a_{i,j}\mid 1\leq i\leq m, 1\leq j\leq r). 
\end{equation}
For a proof, see
\cite[Proposition~2.5]{BMS2}. We will apply this when $R=k\llbracket x_1,\ldots,x_n\rrbracket$,
with $k$ a perfect field, when we may consider the basis given by all monomials
$x_1^{i_1}\cdots x_n^{i_n}$ with $0\leq i_1,\ldots,i_n\leq p^e-1$.

\subsection{Relating multiplier ideals and test ideals via reduction mod $p$}
We start by briefly discussing the framework for reduction to positive characteristic. For details, we refer to \cite[\S 2.2]{MS}.
Suppose  that $Y$ 
is a scheme of finite type over an algebraically closed field $k$ of characteristic zero. We can find a finitely generated $\ZZ$-subalgebra
$A\subseteq k$, a scheme $Y_A$ of finite type over $A$ (a \emph{model} for $Y$), and an isomorphism
 $\phi_A\colon Y_A\times_{\Spec A}\Spec k
\to Y$. If we choose a different $B\subseteq k$, a corresponding scheme $Y_B$ over $B$, 
and an isomorphism $\phi_B\colon Y_B\times_{\Spec B}\Spec\,k\to Y$, then we can find a 
finitely generated $\ZZ$-subalgebra $C\subseteq k$ containing both $A$ and $B$
and an isomorphism $\psi\colon Y_A\times_{\Spec A}\Spec C\to
Y_B\times_{\Spec B}\Spec C$ such that 
$\psi\times_{\Spec C}\Spec k=\phi_B^{-1}\circ\phi_A$. 

Given $A$ and $Y_A$ as above, we consider closed points $s\in\Spec A$. Note that 
the residue field $k(s)$ of $s$ is finite. We denote by $Y_s$ the fiber of $Y_A$ over $s$. 

We always choose $A$ and $Y_A$ as above, but 
all properties that we will discuss refer to 
closed points in some open subset of $\Spec A$; in particular, they are independent of the choice of $A$ and $Y_A$. In light of this, we allow replacing $A$ by some localization 
$A_a$, with $a\in A$ nonzero, and $Y_A$ by $Y_A\times_{\Spec A}\Spec A_a$. 
For example, after possibly replacing $A$ by $A_a$, we may assume that $Y_A$ is 
flat over $A$. Furthermore, 
 if $Y$ is smooth (and irreducible), then 
we may assume that for every closed point $s\in \Spec A$, the fiber $Y_s$ is smooth (and irreducible, of the same dimension as $Y$). 

Given a coherent sheaf $\cF$ on $Y$, we may choose $A$ and a model $Y_A$ such that there is a sheaf
$\cF_A$ on $Y_A$ (a \emph{model} for $\cF$) whose pull-back to $Y$ is isomorphic to $\cF$. 
In this case we denote by $\cF_s$ the restriction of $\cF_A$ to $Y_s$. 
Furthermore, given a morphism of coherent sheaves $\alpha\colon \cF\to\cG$, we may choose 
models $Y_A$, $\cF_A$, and $\cG_A$ such that there is a morphism of sheaves
$\alpha_A\colon\cF_A\to\cG_A$ inducing $\alpha$. In particular, we may consider
$\alpha_s\colon\cF_s\to\cG_s$ for every closed point $s\in\Spec A$. 
Given an exact sequence of sheaves $\cF'\to\cF\to\cF''$, we may assume 
after replacing $A$ by a suitable localization $A_a$ that the sequences
$\cF'_s\to\cF_s\to\cF''_s$ are exact for all closed points $s\in\Spec A$. 
In particular, if
$\cF$ is an ideal in $\cO_Y$, then we may assume that each $\cF_s$ is an ideal in $\cO_{Y_s}$.

Given a morphism of schemes $\pi\colon Y\to X$ of finite type over $k$, we may choose $A$ and the models $Y_A$ 
and $X_A$ such that there is a morphism $\pi_A\colon Y_A\to X_A$ of schemes over $A$ inducing $\pi$. 
In this case we obtain morphisms $\pi_s\colon Y_s\to X_s$ for all closed points $s\in \Spec A$.
Furthermore, if $\pi$ is projective (or birational, finite, open or closed immersion), we may assume 
that each $\pi_s$ has the same property. Given, in addition, a coherent sheaf $\cF$ on $Y$, we may assume, after restricting to a suitable open subset of $\Spec A$ that for all $s$ we have canonical isomorphisms
\begin{equation}\label{base_change}
R^i\pi_*(\cF)_s\simeq R^i(\pi_s)_*(\cF_s).
\end{equation}

We now describe the setting that we will be interested in. 
Suppose that $\fra$ is an everywhere nonzero ideal on 
the smooth scheme $X$ over $k$.
We fix a log resolution 
$\pi\colon Y\to X$
of $\fra$, and write $\fra\cdot\cO_Y=\cO_Y(-D)$, and
$$D=\sum_{i=1}^Na_iE_i\,\,\text{and}\,\,K_{Y/X}=\sum_{i=1}^Nk_iE_i.$$
We choose $A$ and models $\pi_A$, $X_A$, $Y_A$, $D_A$, $(E_i)_A$, and $\fra_A$ such that
for every closed point $s\in\Spec A$ the induced map $\pi_s\colon Y_s\to X_s$
is a log resolution of $\fra_s$, and we have $\fra_s\cdot\cO_{Y_s}=\cO_{Y_s}(-D_s)$ and
$$D_s=\sum_{i=1}^Na_i (E_i)_s\,\,\text{and}\,\,K_{Y_s/X_s}=\sum_{i=1}^Nk_i (E_i)_s.$$
Moreover, given $\lambda\in\RR_{\geq 0}$, we may consider $\cJ(\fra^{\lambda})_s$, and it follows from (\ref{base_change}) that we may assume that
\begin{equation}\label{reduction_multiplier}
\cJ(\fra^{\lambda})_s=(\pi_s)_*\cO_{Y_s}(K_{Y_s/X_s}-\lfloor \lambda D_s\rfloor).
\end{equation}
If we consider all $\lambda$ in some bounded interval, we have the above formula
for all such $\lambda$, due to the fact that we only need to consider finitely many ideals
(it is enough to only consider those $\lambda$ such that $\lambda a_i$ is an integer 
for some $i$). If we want to consider all multiplier ideals $\cJ(\fra^{\lambda})$ and their
reductions to prime characteristic, we simply decree, motivated by Skoda's theorem, that
$\cJ(\fra^{\lambda})_s=\fra_s\cdot \cJ(\fra^{\lambda-1})_s$ for $\lambda\geq\dim(X_s)$;
this reduces us to only having to define $\cJ(\fra^{\lambda})_s$ for $\lambda<\dim(X)$.
In what follows we simply refer to all the above choices as a \emph{model for the multiplier ideals
of} $\fra$.

We can now formulate the two main results due to Hara and Yoshida 
concerning the connection between the reductions of multiplier ideals and the corresponding
test ideals (see \cite[Theorem~3.4]{HY}). We assume that $X$ is a smooth scheme over $k$,
and $\fra$ is an everywhere nonzero ideal on $X$; furthermore, we choose models for the multiplier ideals
of $\fra$ over some finitely generated $\ZZ$-algebra $A\subset k$. 

\begin{theorem}\label{thm1_HY}
With the above notation, after possibly replacing $A$ by a localization $A_a$, we have
$$\tau(\fra_s^{\lambda})\subseteq \cJ(\fra^{\lambda})_s$$
for all closed points $s\in\Spec A$ and all $\lambda\in\RR_{\geq 0}$.
\end{theorem}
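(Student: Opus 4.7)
Since the assertion is local on $X$, I may assume $X = \Spec R$ is affine, and work with $X_s = \Spec R_s$. My goal is to show that the ideal $\tau(\fra_s^{\lambda})$ of $\cO_{X_s}$ is contained in $\cJ(\fra^{\lambda})_s = (\pi_s)_* \cO_{Y_s}(K_{Y_s/X_s} - \lfloor \lambda D_s \rfloor)$. The plan is to factor the comparison through the log resolution $\pi_s \colon Y_s \to X_s$, where $\fra_s \cdot \cO_{Y_s} = \cO_{Y_s}(-D_s)$ is invertible with SNC support and the test ideal is computable, and then push the conclusion back down to $X_s$ via the Grothendieck trace for the proper birational morphism $\pi_s$.

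The first ingredient is a direct computation on $Y_s$. In local coordinates $y_1,\dots,y_n$ on $Y_s$ in which $D_s$ is a monomial divisor, the ideal $(\fra_s \cO_{Y_s})^{\lceil \lambda p^e \rceil}$ is principal, generated by a single monomial $y^{\lceil \lambda p^e \rceil D_s}$. Applying formula~(\ref{formula_test_ideal}) with the monomial basis $\{y^{\beta} : 0 \leq \beta_j < p^e\}$ of $\cO_{Y_s}$ over $\cO_{Y_s}^{p^e}$ immediately yields
\[
\tau\bigl((\fra_s \cO_{Y_s})^{\lambda}\bigr) \;=\; \cO_{Y_s}\bigl(-\lfloor \lambda D_s \rfloor\bigr).
\]

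The second and harder ingredient is the pushforward inclusion
\[
\tau(\fra_s^{\lambda}) \;\subseteq\; (\pi_s)_* \Bigl( \cO_{Y_s}(K_{Y_s/X_s}) \otimes \tau\bigl((\fra_s \cO_{Y_s})^{\lambda}\bigr) \Bigr),
\]
where the tensor product is interpreted as a submodule of the function field via the identification $\omega_{Y_s/X_s} = \cO_{Y_s}(K_{Y_s/X_s})$; this is where the $K_{Y_s/X_s}$ term in the multiplier ideal formula enters. The proof rests on two facts: (i) $(\pi_s)_* \omega_{Y_s} = \omega_{X_s}$, coming from the Grothendieck trace for the proper birational morphism $\pi_s$ between smooth schemes; and (ii) this trace intertwines the Frobenius trace maps on $Y_s$ and $X_s$. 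Using (ii), one argues that any $f \in (\fra_s^{\lceil \lambda p^e \rceil})^{[1/p^e]}$ (for $e \gg 0$) can be realized, via the trace, as the pushforward of a section of $\cO_{Y_s}(K_{Y_s/X_s}) \cdot \bigl((\fra_s \cO_{Y_s})^{\lceil \lambda p^e \rceil}\bigr)^{[1/p^e]}$ on $Y_s$. Combining this pushforward inclusion with the local computation above gives the desired
\[
\tau(\fra_s^{\lambda}) \;\subseteq\; (\pi_s)_* \cO_{Y_s}\bigl(K_{Y_s/X_s} - \lfloor \lambda D_s \rfloor\bigr) \;=\; \cJ(\fra^{\lambda})_s.
\]

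The main obstacle is the pushforward step: while the local computation on $Y_s$ is a direct application of formula~(\ref{formula_test_ideal}), the pushforward requires carefully tracking how the $[1/p^e]$-operation on $X_s$ relates to the one on $Y_s$ through the Grothendieck trace, exploiting that both traces are compatible with Frobenius. This Frobenius--trace compatibility between smooth $F$-finite schemes is the essential technical content of the Hara--Yoshida comparison; once it is in place, the inclusion follows formally.
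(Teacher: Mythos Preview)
Your outline is correct and follows the standard route: compute the test ideal of the pulled-back ideal on the log resolution (where it is monomial and equals $\cO_{Y_s}(-\lfloor\lambda D_s\rfloor)$), then use the compatibility of the Frobenius trace with the Grothendieck trace for $\pi_s$ to obtain the pushforward inclusion $\tau(\fra_s^{\lambda})\subseteq(\pi_s)_*\bigl(\cO_{Y_s}(K_{Y_s/X_s})\cdot\tau((\fra_s\cO_{Y_s})^{\lambda})\bigr)$. The paper does not supply its own proof of this theorem; it attributes the result to Hara--Yoshida \cite{HY} and, for the simplified regular setting with the $[1/p^e]$ definition of test ideals, refers to \cite[Proposition~4.3]{BHMM}, whose argument is exactly the trace-compatibility computation you sketch.
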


Note that even if we start with models for $X$ and $\fra$, in order to apply
Theorem~\ref{thm1_HY} we might need to change $A$.
Indeed, we need to guarantee that some log resolution of $\fra$ is defined over $A$, and that
(\ref{reduction_multiplier}) holds for $\lambda<\dim(X)$. 

\begin{theorem}\label{thm2_HY}
With the above notation, given any $\lambda\in\RR_{\geq 0}$, there is an open subset
$V_{\lambda}\subseteq\Spec A$ such that 
$$\tau(\fra_s^{\lambda})=\cJ(\fra^{\lambda})_s$$
for all closed points $s\in V_{\lambda}$.
\end{theorem}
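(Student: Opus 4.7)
By Theorem~\ref{thm1_HY}, the inclusion $\tau(\fra_s^{\lambda}) \subseteq \cJ(\fra^{\lambda})_s$ holds for all closed $s \in \Spec A$, so the plan is to produce the reverse inclusion $\cJ(\fra^{\lambda})_s \subseteq \tau(\fra_s^{\lambda})$ on a Zariski open $V_\lambda \subseteq \Spec A$. By Skoda's theorem in both the multiplier and test ideal settings, $\cJ(\fra^\lambda)_s = \fra_s \cdot \cJ(\fra^{\lambda-1})_s$ and $\tau(\fra_s^\lambda) = \fra_s \cdot \tau(\fra_s^{\lambda-1})$ for $\lambda \geq \dim(X)$, so induction on $\lceil \lambda\rceil$ reduces us to the case $\lambda < \dim(X)$. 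Fix a log resolution $\pi\colon Y \to X$ of $\fra$ together with a model over $A$, and shrink $\Spec A$ so that each $\pi_s$ is a log resolution of $\fra_s$ satisfying (\ref{reduction_multiplier}).

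Next, I would compute the test ideal on the log resolution using the Cartier operator (trace of Frobenius). On the smooth variety $Y_s$ there is, for each $e\geq 1$, a natural trace map $\Phi^e_{Y_s}\colon F^e_*\omega_{Y_s}\to\omega_{Y_s}$. Following \cite[\S 3]{HY}, one twists this map by a line bundle depending on $\lambda$ and $e$ and pushes forward along $\pi_s$. The image lies in $(\fra_s^{\lceil \lambda p^e\rceil})^{[1/p^e]} = \tau(\fra_s^\lambda)$ once $e$ is large enough (with $e$ bounded only in terms of $\lambda$ and the $a_i$), while the target is $(\pi_s)_*\cO_{Y_s}(K_{Y_s/X_s} - \lfloor \lambda D_s\rfloor) = \cJ(\fra^\lambda)_s$.

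Surjectivity of this pushforward-trace map reduces via the long exact sequence to a vanishing of the form $R^1(\pi_s)_*\cG_{e,s}=0$ for specific sheaves $\cG_{e,s}$ on $Y_s$. The analogous vanishing in characteristic zero is of Grauert-Riemenschneider type, applied to the sheaves entering the Cartier-operator complex on $Y$ (refined by twisting by $-\lfloor \lambda D\rfloor$); by the base-change identification (\ref{base_change}) this vanishing spreads out to the fibers over a Zariski open $V_\lambda \subseteq \Spec A$. Since the required $e$ is bounded in terms of $\lambda$, only finitely many sheaves $\cG_{e,s}$ are involved, so a single open subset suffices.

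The main obstacle is the Cartier-operator bookkeeping in the second step: one must identify the image of the twisted pushforward-trace precisely as an ideal contained in $(\fra_s^{\lceil \lambda p^e\rceil})^{[1/p^e]}$, and align this image with the characteristic-zero vanishing whose spread-out is invoked in the third step. This matching is the technical core of the Hara-Yoshida argument, and I would carry it out by following their computation in \cite[\S 3]{HY} verbatim, then invoking base change to transfer the vanishing from the generic fiber to all closed fibers over $V_\lambda$.
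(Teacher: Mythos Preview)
The paper does not give its own proof of this theorem; it cites \cite[Theorem~3.4]{HY} and later records the key ingredient as Proposition~\ref{criterion}. Your sketch has the right overall shape --- surjectivity of a twisted trace, reduced to cohomology vanishing that one hopes to spread out from characteristic zero --- but the last two paragraphs contain a genuine gap, not just bookkeeping.

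The surjectivity you need does not reduce to a single vanishing $R^1(\pi_s)_*\cG_{e,s}=0$. The Cartier operator is iterated one step at a time, and the kernel of each step is filtered (via the Cartier isomorphism) by the log differentials $\Omega^j_{Y_s}(\log E_s)$, not just $\omega_{Y_s}$; so the relevant vanishings are of Akizuki--Nakano type, not Grauert--Riemenschneider. More importantly, in the notation of Proposition~\ref{criterion} one needs the vanishing of $H^i\bigl(Y_s,\Omega^{n-i}_{Y_s}(\log E_s)(-E_s+\lceil p^{\ell}G_s\rceil)\bigr)$ for \emph{all} $\ell\geq 0$, so the claim that ``only finitely many sheaves are involved'' is false, and one cannot obtain them all by spreading out finitely many characteristic-zero vanishings. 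The missing device is an auxiliary $\pi$-ample $\QQ$-divisor $D'$ supported on the exceptional locus: one replaces $-\mu Z'$ by $G'=\mu(D'-Z')$ with $\mu$ slightly larger than $\lambda$ and $D'$ small enough that $\lceil G'\rceil=\lceil-\mu Z'\rceil$. Then for $\ell\geq 1$ the twist $\lceil p^{\ell}G'\rceil$ contains a large multiple of the ample $D'$, and the required vanishings follow from Serre vanishing, uniformly over $\Spec A$ via a Castelnuovo--Mumford regularity argument (compare Lemma~\ref{lem1_thm_main2} and the proof of Theorem~\ref{thm_main2}). Only the single case $\ell=0$ is handled by Akizuki--Nakano in characteristic zero (via Deligne--Illusie lifting to $W_2$, see \cite[Corollary~3.8]{Hara}) and then spread out. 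Without this ample perturbation and the two-regime treatment of $\ell$, there is no mechanism to control the infinitely many vanishings, and your spreading-out step does not go through.
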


It is definitely not the case that $V_{\lambda}$ can be taken independently of $\lambda$.
However, it is expected that there is a dense set of closed points in $s\in \Spec A$
such that the equality in Theorem~\ref{thm2_HY} holds for all $\lambda\in\RR_{\geq 0}$,
see \cite{MS}.

The proof of Theorem~\ref{thm1_HY} is elementary. For a proof in our simplified setting,
and with our definitions, see \cite[Proposition~4.3]{BHMM}. The proof of Theorem~\ref{thm2_HY}
is deeper, and makes use of the action of the Frobenius on the de Rham complex. 
We will make use of the main ingredient 
in this proof in \S 4, in order to give lower
bounds for the $F$-jumping numbers of the reductions of $\fra$ to positive characteristic.

In particular, the above two theorems give the following relation between the log canonical
threshold of $\fra$ and the $F$-pure thresholds of the reductions $\fra_s$ to positive characteristic. If $A$ is chosen as in Theorem~\ref{thm2_HY}, then the theorem implies that
$\lct(\fra)\geq\fpt(\fra_s)$ for all closed points $s\in\Spec A$. On the other hand, 
Theorem~\ref{thm2_HY} implies that for every $\epsilon>0$ there is an open subset
$U_{\epsilon}\subseteq \Spec A$ such that $\lct(\fra)-\fpt(\fra_s)<\epsilon$ for every closed point $s\in U_{\epsilon}$.

The above results raise the following problem that we will consider in the next two sections.
Suppose that $\lambda$ is a jumping number of $\fra$, and denote by $\lambda'$ the largest
jumping number $<\lambda$ (when $\lambda=\lct(\fra)$, we put $\lambda'=0$).
After replacing $A$ by some localization $A_a$, we may assume by
Theorem~\ref{thm2_HY} that for every closed point $s\in\Spec A$ we have 
$$\cJ(\fra^{\lambda'})_s=\tau(\fra_s^{\lambda'})\,\,\text{and}\,\,
\cJ(\fra^{\lambda})_s=\tau(\fra_s^{\lambda}).$$
It follows that in this case there is an $F$-jumping number of $\fra_s$ in the interval
$(\lambda',\lambda]$. The following problem addresses the question of how close this jumping number is from $\lambda$.

\begin{problem}\label{pb1}
With the above notation, show that there are $C\in\RR_{>0}$ and $N\in\ZZ_{>0}$ 
such that 
after possibly replacing $A$ by a localization, for every closed point in
$\Spec A$ and every 
$F$-jumping number $\mu$ of $\fra_s$ in the open interval $(\lambda', \lambda)$,
we have
\begin{equation}\label{eq_problem}
\frac{1}{{\rm char}(k(s))^{N}}\leq \lambda-\mu\leq\frac{C}{{\rm char}(k(s))}.
\end{equation}
\end{problem}

In the next section we prove the existence of $N$ in Problem~\ref{pb1}
when $\fra$ is a locally principal ideal, while in \S 4 we show how to find $C$ for arbitrary $\fra$.

\section{Upper bounds for $F$-jumping numbers}

The key ingredient for giving an upper bound for $F$-jumping numbers is provided by the following result, which is \cite[Proposition~4.3]{BMS1}. We include the proof for the sake of completeness.

\begin{lemma}\label{lem_BMS}
Let $X$ be a regular $F$-finite scheme of characteristic $p>0$, and let 
$\fra$ be an everywhere nonzero locally principal ideal in $\cO_X$. 
Given $\lambda=\frac{r}{p^e-1}$ for positive integers $r$ and $e$, let us put
$\lambda_m=\left(1-\frac{1}{p^{me}}\right)\lambda$ for $m\geq 0$. If there is an 
$F$-jumping number of $\fra$ in $(\lambda_m,\lambda_{m+1}]$ with $m\geq 1$, then
there is also an $F$-jumping number of $\fra$ in $(\lambda_{m-1},\lambda_m]$. In particular,
in this case there are at least $m+1$ $F$-jumping numbers of $\fra$ in $(0,\lambda)$.
\end{lemma}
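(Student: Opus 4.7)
The plan is to manufacture an $F$-jumping number in $(\lambda_{m-1},\lambda_m]$ from a given one $\mu\in(\lambda_m,\lambda_{m+1}]$ via the affine transformation $\mu\mapsto p^e\mu-r$. The arithmetic backbone is the identity $p^e\lambda_m = r+\lambda_{m-1}$, which is immediate from $\lambda(p^e-1)=r$: indeed,
$$p^e\lambda_m \,=\, p^e\bigl(1-p^{-me}\bigr)\lambda \,=\, (p^e-1)\lambda + \bigl(1-p^{-(m-1)e}\bigr)\lambda \,=\, r+\lambda_{m-1}.$$
Consequently, the map $\nu\mapsto p^e\nu-r$ carries $(\lambda_m,\lambda_{m+1}]$ bijectively onto $(\lambda_{m-1},\lambda_m]$, so it suffices to show that $p^e\mu-r$ is an $F$-jumping number of $\fra$ whenever $\mu$ is.

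I would establish this in two moves. First, since multiplication by $p$ preserves $F$-jumping numbers (as recalled in \S 2), iterating $e$ times shows that $p^e\mu$ is an $F$-jumping number. Second, because $\fra$ is locally principal and everywhere nonzero, Skoda's theorem for test ideals gives $\tau(\fra^\nu)=\fra\cdot\tau(\fra^{\nu-1})$ for $\nu\ge 1$, and iterating this $r$ times yields
$$\tau(\fra^\nu) \,=\, \fra^{r}\cdot\tau(\fra^{\nu-r}) \quad\text{for all } \nu\ge r.$$
Since $m\ge 1$ and $\mu>\lambda_m$, we have $p^e\mu > p^e\lambda_m = r+\lambda_{m-1}\ge r$, so Skoda applies at $\nu=p^e\mu$ and, for all sufficiently small $\epsilon>0$, also at $\nu=p^e\mu-\epsilon$. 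The jump at $p^e\mu$ then forces $\fra^r\tau(\fra^{p^e\mu-r-\epsilon})\ne\fra^r\tau(\fra^{p^e\mu-r})$, and cancelling the factor $\fra^r$ gives $\tau(\fra^{p^e\mu-r-\epsilon})\ne\tau(\fra^{p^e\mu-r})$. Hence $p^e\mu-r$ is an $F$-jumping number lying in $(\lambda_{m-1},\lambda_m]$.

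For the ``in particular'' statement, I would apply the main claim repeatedly, producing $F$-jumping numbers $\mu_m,\mu_{m-1},\ldots,\mu_0$ with $\mu_i\in(\lambda_i,\lambda_{i+1}]$, giving $m+1$ distinct $F$-jumping numbers in $(0,\lambda_{m+1}]\subset(0,\lambda)$.

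The only point requiring real care is the cancellation step: passing from $\fra^r\cdot I=\fra^r\cdot J$ to $I=J$. This is justified by working locally, where $\fra$ is generated by a single nonzerodivisor $f$ (the ambient scheme $X$ is regular and $\fra$ is everywhere nonzero), so that multiplication by $f^r$ is injective on ideals. A minor auxiliary check, already noted above, is that one may choose $\epsilon>0$ small enough that both $p^e\mu$ and $p^e\mu-\epsilon$ remain in the range $\ge r$ where the iterated Skoda formula is valid; this is automatic because $p^e\mu>r$.
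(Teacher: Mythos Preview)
Your proof is correct and follows essentially the same route as the paper: apply the affine map $\mu\mapsto p^e\mu-r$, using that multiplication by $p$ preserves $F$-jumping numbers and that Skoda's theorem for a locally principal ideal shows subtracting an integer (in the appropriate range) does as well. You have simply spelled out more of the details---the identity $p^e\lambda_m=r+\lambda_{m-1}$, the cancellation of $\fra^r$ via local injectivity of multiplication by a nonzerodivisor, and the range check $p^e\mu>r$---than the paper's terse version.
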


\begin{proof}
Note that Skoda's theorem for a locally principal ideal implies that $\mu>1$ is an $F$-jumping number if and only if $\mu-1$ is an $F$-jumping number. Furthermore, as we have mentioned, if $\mu$ is an $F$-jumping number for $\fra$, then also $p\mu$ is an
$F$-jumping number. It is easy to see that if $\mu\in (\lambda_m,\lambda_{m+1}]$, then
$p^e\mu-r\in (\lambda_{m-1},\lambda_m]$, and as we have seen, $p^e\mu-r$
is an $F$-jumping number of $\fra$.
\end{proof}

\begin{corollary}\label{cor_BMS}
With $X$ and $\fra$ as in Lemma~\ref{lem_BMS}, if there are at most $d$ $F$-jumping numbers
of $\fra$ that are $<\lambda=\frac{r}{p^e-1}$, then there are no $F$-jumping numbers of $\fra$ in the open interval $(\lambda_d,\lambda)$.
\end{corollary}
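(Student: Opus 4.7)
My plan is to argue by contradiction using Lemma~\ref{lem_BMS} directly. Suppose that there is an $F$-jumping number $\mu$ of $\fra$ lying in the open interval $(\lambda_d,\lambda)$. Since the sequence $(\lambda_m)_{m\geq 0}$ is strictly increasing with supremum $\lambda$, the first step is to locate $\mu$ in one of the half-open intervals $(\lambda_m,\lambda_{m+1}]$. Concretely, I would choose the unique integer $m\geq d$ for which $\mu\in(\lambda_m,\lambda_{m+1}]$; such an $m$ exists because $\mu>\lambda_d$ and $\mu<\lambda=\lim_{k\to\infty}\lambda_k$.

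Having placed $\mu$ in $(\lambda_m,\lambda_{m+1}]$ with $m\geq d\geq 1$ (we may assume $d\geq 1$, since if $d=0$ the statement is essentially the same observation with the base of the induction handled by the lemma), the second step is to invoke the final sentence of Lemma~\ref{lem_BMS}: the existence of an $F$-jumping number in $(\lambda_m,\lambda_{m+1}]$ already forces at least $m+1$ distinct $F$-jumping numbers of $\fra$ in $(0,\lambda)$. Since $m\geq d$, this produces at least $d+1$ $F$-jumping numbers of $\fra$ that are strictly smaller than $\lambda$, contradicting the hypothesis of the corollary. One small case to handle separately is when $d=0$: then the assumption says there are no $F$-jumping numbers below $\lambda$, and the lemma (applied in the same way, or reading off directly from its proof) still produces one from any supposed $\mu\in(\lambda_0,\lambda)$, giving the desired contradiction.

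There is no real obstacle here; the only thing to be careful about is the bookkeeping between the strict inequality $\mu<\lambda$ in the statement and the half-open intervals $(\lambda_m,\lambda_{m+1}]$ in the lemma, and the fact that the lemma's iterative step requires $m\geq 1$ (so one needs to note that $\lambda$ itself is excluded from the interval and that for $m\geq d\geq 1$ the hypothesis of the lemma is met). Once these are set up, the corollary follows in one short paragraph from Lemma~\ref{lem_BMS}.
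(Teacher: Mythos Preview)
Your argument is correct and is exactly the intended one: the paper states Corollary~\ref{cor_BMS} without proof, as an immediate consequence of the ``in particular'' clause of Lemma~\ref{lem_BMS}, and your contradiction argument is precisely how one unpacks that implication. Your caution about the case $d=0$ is harmless but unnecessary, since then $\lambda_0=0$ and any $\mu\in(\lambda_0,\lambda)$ is itself an $F$-jumping number $<\lambda$, contradicting the hypothesis directly.
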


We can now state and prove the main result of this section. Suppose that 
$A$ is a finitely generated $\ZZ$-algebra, and $X$ is a smooth scheme of finite type over 
$\Spec A$, of relative dimension $n$. Let $\fra$ be a locally principal ideal on $X$,
whose restriction to every fiber is everywhere nonzero.

\begin{theorem}\label{thm_main1}
With the above notation, if $\lambda\in\QQ_{>0}$, then 
there are positive integers $N$ and $p_0=p_0(\lambda)$ such that for every closed point $s\in\Spec A$ 
with ${\rm char}(k(s))>p_0$ and every $F$-jumping number
$\mu$ of $\fra_s$ that is $<\lambda$ we have
$$\lambda-\mu\geq\frac{1}{{\rm char}(k(s))^{N}}.$$
In particular, this holds for every closed point $s\in\Spec A_a$, for some nonzero
$a\in A$. 
\end{theorem}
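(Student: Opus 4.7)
My plan is to apply Corollary~\ref{cor_BMS} after putting $\lambda$ into a form adapted to the characteristic $p = \mathrm{char}(k(s))$, combined with an a priori uniform bound on the number of $F$-jumping numbers of $\fra_s$ lying below $\lambda$. Write $\lambda = a/b$ in lowest terms. For $p$ larger than every prime divisor of $b$, the multiplicative order $e = e(p)$ of $p$ modulo $b$ is well defined and satisfies $e \le \phi(b) \le b-1 =: e_0$, a constant depending only on $\lambda$. Setting $r = a(p^e-1)/b \in \ZZ_{>0}$, one can then write $\lambda = r/(p^e-1)$, which is precisely the form of hypothesis required by Lemma~\ref{lem_BMS}.

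Granting, for the moment, a uniform bound $d$, depending only on $\fra$ and $\lambda$, on the number of $F$-jumping numbers of $\fra_s$ in $(0,\lambda)$ (valid after possibly localizing $A$, for every closed $s \in \Spec A$ with $p$ sufficiently large), Corollary~\ref{cor_BMS} immediately forces every $F$-jumping number $\mu < \lambda$ to satisfy $\mu \le \lambda_d = (1-1/p^{ed})\lambda$, and hence
\[
\lambda - \mu \;\ge\; \lambda - \lambda_d \;=\; \frac{\lambda}{p^{ed}} \;\ge\; \frac{\lambda}{p^{e_0 d}}.
\]
Enlarging $p_0$ so that $p_0 \ge 1/\lambda$ guarantees $\lambda \ge 1/p$, from which $\lambda - \mu \ge 1/p^{e_0 d + 1}$, and one takes $N := e_0 d + 1$. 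The last sentence of the theorem (replacing $A$ by some $A_a$) follows because each of the finitely many conditions accumulated above is open on $\Spec A$.

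What remains, and will be the main obstacle, is producing the uniform bound $d$. After possibly localizing $A$, I would apply Theorem~\ref{thm2_HY} to each of the finitely many char-zero jumping numbers $\beta_1 < \beta_2 < \cdots < \beta_K$ of $\fra$ in $(0, \lambda]$, obtaining $\tau(\fra_s^{\beta_j}) = \cJ(\fra^{\beta_j})_s$ for every $j$ and every closed $s$. Combined with $\tau(\fra_s^\mu) \subseteq \cJ(\fra^\mu)_s$ from Theorem~\ref{thm1_HY}, and with the fact that $\cJ(\fra^\mu)$ is constant on $[\beta_j, \beta_{j+1})$, this traps every intermediate test ideal $\tau(\fra_s^\mu)$ between the two fixed char-zero multiplier ideals $\cJ(\fra^{\beta_{j+1}})_s \subseteq \tau(\fra_s^\mu) \subseteq \cJ(\fra^{\beta_j})_s$. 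The plan is then to bound the number of additional jumps inside each interval $(\beta_j, \beta_{j+1})$ using combinatorial data from the log resolution $D = \sum a_i E_i$ of $\fra$---for instance, by controlling the length of $\cJ(\fra^{\beta_j})_s/\cJ(\fra^{\beta_{j+1}})_s$ at the generic points of its support, which lie on the divisorial components of $V(\fra_s)$.

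The hardest part is precisely this last step: nothing in the formal definition of test ideals prevents $\tau(\fra_s^\mu)$ from refining the family $\{\cJ(\fra^\mu)_s\}$ in ways that a priori vary wildly with $s$, so the bound must assert that any such refinement is controlled \emph{uniformly} in $s$ by combinatorial data coming from characteristic zero. Should a direct combinatorial argument prove awkward, an alternative route is to extract the needed uniformity from the arguments in \cite{BMS1} underlying the discreteness of $F$-jumping numbers for locally principal ideals, where a bound of essentially the shape we need is implicit.
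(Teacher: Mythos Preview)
Your overall strategy matches the paper's exactly: write $\lambda = r/(p^e-1)$ with $e$ bounded independently of $p$, obtain a uniform bound $d$ on the number of $F$-jumping numbers of $\fra_s$ below $\lambda$, and then invoke Corollary~\ref{cor_BMS} to conclude $\lambda - \mu \ge \lambda/p^{de}$. The remaining step---producing $d$---is where your argument has a genuine gap.

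Trapping $\tau(\fra_s^{\mu})$ between $\cJ(\fra^{\beta_{j+1}})_s$ and $\cJ(\fra^{\beta_j})_s$ is correct, but controlling the length of the quotient \emph{at the generic points of its support} does not bound the length of a chain of ideals between them. The quotient is typically supported on a positive-dimensional locus, and two ideals can agree at every generic point of that support yet differ over a proper closed subset, so arbitrarily long chains are invisible to generic-point lengths. (Toy illustration: in $k[x,y]$ the quotient $(x)/(x^2)$ has length $1$ at the generic point of $V(x)$, yet $(x^2) \subsetneq (x^2, xy^n) \subsetneq (x^2, xy^{n-1}) \subsetneq \cdots \subsetneq (x)$ is a chain of arbitrary length.) Nothing coming from the log resolution rescues this, and your fallback to \cite{BMS1} is too vague to stand as an argument.

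The paper's route avoids the log resolution entirely. Present $\cO(X)$ as $A[x_1,\ldots,x_m]/J$ and lift $f$ to $g \in A[x_1,\ldots,x_m]$. By \cite[Proposition~3.6]{BMS2} one has $\tau(f_s^{\mu}) = \tau\big((J_s + (g_s))^{\mu+m-n}\big) \cdot \cO(X_s)$, and by \cite[Proposition~3.2]{BMS2} the ideal $\tau\big((J_s + (g_s))^{\mu+m-n}\big)$ is generated in degree at most $\lfloor M(\mu+m-n) \rfloor$, where $M$ bounds the degrees of generators of $J+(g)$. Hence for all $\mu < \lambda$ these test ideals are determined by their graded pieces inside a single finite-dimensional space of polynomials whose dimension depends only on $m$, $M$, and $\lambda$---not on $s$---and any strict chain among them has length at most that dimension. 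This is the uniform $d$. The bound you need lives in \cite{BMS2}, not \cite{BMS1}.
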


\begin{proof}
After taking a suitable affine open cover of $X$, we may assume that 
$X={\rm Spec}(R)$ is affine and
$\fra=(f)$ is a principal ideal.
Let us write $\lambda=\frac{a}{b}$, with $a,b\in\ZZ_{>0}$. We first require that
$p_0$ is such that all divisors of $b$ are $\leq p_0$.

\noindent{\bf Claim}. There is a positive integer $d$ such that for every closed point
$s\in\Spec A$, there are at most $d$ $F$-jumping numbers of $f_s$ that are
$<\lambda$. 

Indeed, since $X$ is of finite type over $A$, we can write
$R\simeq A[x_1,\ldots,x_m]/J$
for some ideal $J$, and let $g\in A[x_1,\ldots,x_m]$ be a polynomial whose class 
corresponds to $f$.  
It follows from
\cite[Proposition~3.6]{BMS2} that $\tau(f_s^{\mu})=\tau((J_s+(g_s))^{\mu+m-n})\cdot R_s$.
On the other hand, let $M$ be such that $J+(g)\subseteq A[x_1,\ldots,x_m]$ is 
generated in degree $\leq M$. It follows from \cite[Proposition~3.2]{BMS2} that
$\tau((J_s+(g_s))^{\mu+m-n})$ is generated in degree $\leq \lfloor M(\mu+m-n)\rfloor$
for every $\mu$. In particular, the number of $F$-jumping numbers of $f_s$ that are $<\lambda$
is bounded above by the number of $F$-jumping numbers of $J_s+(g_s)$ that are
$<\lambda+m-n$, which in turn is bounded above by the dimension of the vector space of polynomials in $k(s)[x_1,\ldots,x_m]$ of degree $\leq \lfloor M(\lambda+m-n)\rfloor$. 
Note that this dimension is independent of the closed point $s\in\Spec A$, hence we obtain $d$
as in the claim.

Let us fix $d$ as in the claim, and let $s\in\Spec A$ be a closed point. Let 
$p={\rm char}(k(s))$ and let $e\geq 1$ be the
order of $p$ in the group of units of $\ZZ/b\ZZ$ (recall that $p$ does not divide $b$). 
In this case we can write $\lambda=\frac{r}{p^e-1}$, and it follows from Corollary~\ref{cor_BMS}
that if $\mu<\lambda$ is an $F$-jumping number of $\fra_s$, then 
$$\lambda-\mu\geq\lambda-\lambda_d=\frac{\lambda}{p^{de}}.$$
Note that $e\leq b$, and if we require also $p_0\geq\lambda^{-1}$, we see that
$N=db+1$ satisfies the first assertion in the theorem. 

The second assertion is clear, too: if we consider
$u\colon \Spec A\to\Spec \ZZ$, it is is enough to take a nonzero $a\in A$
such that $u(\Spec A_a)$ does not contain any prime $p\ZZ$, with $0<p\leq p_0$. 
\end{proof}

\begin{corollary}\label{thm_main1_v2}
Given $\lambda\in\QQ_{>0}$ and $n,M\in\ZZ_{>0}$, there are positive integers $N=N(n,M,\lambda)$
and $p_0=p_0(\lambda)$ 
such that for every $F$-finite field $k$ of  characteristic $p\geq p_0$, and every
$f\in k[x_1,\ldots,x_n]$ with ${\rm deg}(f)\leq M$, we have
$$\lambda-\mu\geq\frac{1}{p^N}$$
for every $F$-jumping number $\mu<\lambda$ of $f$.
\end{corollary}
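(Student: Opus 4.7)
The plan is to mimic the proof of Theorem~\ref{thm_main1}, observing that the bound on the number of $F$-jumping numbers $<\lambda$ constructed there depends only on the degree bound of the generators and on $\lambda$, and in particular is uniform in the ground field. Concretely, I would proceed in three steps.

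First, I would produce a uniform bound on the number of $F$-jumping numbers of $f$ in $(0,\lambda)$. Since $(f)$ is generated in degree $\leq M$, by \cite[Proposition~3.2]{BMS2} the test ideal $\tau(f^\mu)$ is generated in degree $\leq \lfloor M\mu\rfloor \leq D := \lfloor M\lambda\rfloor$ for every $\mu < \lambda$. If $\mu_1 < \cdots < \mu_t$ are the $F$-jumping numbers of $f$ strictly below $\lambda$, then intersecting the strictly decreasing chain $\tau(f^{\mu_1}) \supsetneq \cdots \supsetneq \tau(f^{\mu_t})$ with the space $k[x_1,\ldots,x_n]_{\leq D}$ yields a strictly decreasing chain of $k$-subspaces, since all these ideals are generated in degree $\leq D$. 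Hence
\[
t \,\le\, d \,:=\, \binom{n + \lfloor M\lambda\rfloor}{n},
\]
a quantity depending only on $n$, $M$, and $\lambda$.

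Second, I would put $\lambda$ into the form required by Corollary~\ref{cor_BMS}. Writing $\lambda = a/b$ with $a,b\in\ZZ_{>0}$, I would take $p_0 = p_0(\lambda)$ large enough that every prime $p > p_0$ satisfies $p\nmid b$ and $p > 1/\lambda$. For such a prime, let $e$ be the multiplicative order of $p$ in $(\ZZ/b\ZZ)^\times$; then $e\le b$ and $b\mid p^e-1$, so $\lambda = r/(p^e-1)$ for some positive integer $r$. Corollary~\ref{cor_BMS} then rules out $F$-jumping numbers of $f$ in the open interval $(\lambda_d, \lambda)$, where $\lambda_d = (1 - p^{-de})\lambda$.

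Finally, for any $F$-jumping number $\mu<\lambda$ of $f$ I would conclude
\[
\lambda - \mu \,\ge\, \lambda - \lambda_d \,=\, \frac{\lambda}{p^{de}} \,\ge\, \frac{1}{p^{de+1}} \,\ge\, \frac{1}{p^{db+1}},
\]
using $p\ge 1/\lambda$ and $e\le b$, so that $N := db+1$ does the job. I do not anticipate a serious obstacle: the proof of Theorem~\ref{thm_main1} already contains all the ingredients, and the only new observation is that the bound $d$ produced there depends only on the degree bound $M$ for the generators, the number of variables $n$, and the threshold $\lambda$, which is precisely the uniformity furnished by the polynomial-ring setting.
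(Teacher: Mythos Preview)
Your proposal is correct and follows essentially the same approach as the paper's own proof: the paper explicitly notes that one may ``simply apply the argument in the proof of the theorem, noting that in this case we obtain directly the bound for the number of $F$-jumping numbers of $f$ that are $<\lambda$, in terms of $n$, $M$, and $\lambda$,'' which is exactly what you do. Your argument is in fact somewhat more explicit than the paper's, since you write down the bound $d=\binom{n+\lfloor M\lambda\rfloor}{n}$ directly from \cite[Proposition~3.2]{BMS2} without the detour through \cite[Proposition~3.6]{BMS2} that was needed in Theorem~\ref{thm_main1} for a general affine $X$.
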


\begin{proof}
We could either apply Theorem~\ref{thm_main1} for the universal polynomial of degree
$\leq M$ in $n$ variables (with $\Spec A$ being the parameter space for such polynomials),
or simply apply the argument in the proof of the theorem, noting that in this case
we obtain directly the bound for the number of $F$-jumping numbers of $f$ that are $<\lambda$,
in terms of $n$, $M$, and $\lambda$. 
\end{proof}

Suppose now that $X$ is a smooth scheme over an algebraically closed field 
$k$, and $\fra$ is an everywhere nonzero, locally principal ideal on $X$. Let us consider models for $X$ and
 $\fra$
over a finitely generated $\ZZ$-algebra $A\subset k$. 
By applying Theorem~\ref{thm_main1} to $\lambda=\lct(\fra)$, we obtain the following.

\begin{corollary}\label{cor_lct}
With the above notation, there is $N$ such that after possibly replacing $A$ by a localization
$A_a$, we have $\lct(\fra)-\fpt(\fra_s)\geq \frac{1}{{\rm char}(k(s))^N}$ for every closed point
$s\in\Spec A$ such that $\lct(\fra)\neq\fpt(\fra_s)$.
\end{corollary}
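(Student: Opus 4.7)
The plan is to derive the corollary as an immediate application of Theorem~\ref{thm_main1} with the specific choice $\lambda=\lct(\fra)$. The first observation is that $\lct(\fra)$ is a positive rational number (as the smallest jumping number of $\fra$, when $\fra\neq\cO_X$; the case $\fra=\cO_X$ is vacuous since then no $\fpt(\fra_s)$ can differ from $\lct(\fra)$), so it indeed satisfies the hypothesis $\lambda\in\QQ_{>0}$ required by the theorem.

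First, I would fix models $X_A$ and $\fra_A$ over a finitely generated $\ZZ$-subalgebra $A\subset k$, chosen so that every fiber $X_s$ is smooth and $\fra_s$ is everywhere nonzero and locally principal. After possibly shrinking $A$, Theorem~\ref{thm1_HY} guarantees the containment $\tau(\fra_s^{\mu})\subseteq\cJ(\fra^{\mu})_s$ for all closed points $s\in\Spec A$ and all $\mu\in\RR_{\geq 0}$; in particular, for any $\mu<\lct(\fra)$ we have $\cJ(\fra^{\mu})=\cO_X$, hence $\tau(\fra_s^{\mu})=\cO_{X_s}$. This forces $\fpt(\fra_s)\geq\lct(\fra)\cdot (\text{nothing})$, or rather the converse bound: $\fpt(\fra_s)\leq\lct(\fra)$ is false without thought, but the correct inference is that the smallest $\mu$ with $\tau(\fra_s^{\mu})\neq\cO_{X_s}$ cannot be $<\lct(\fra)$, i.e.\ $\fpt(\fra_s)\geq\lct(\fra)$? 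Actually the containment goes the other way, so what we in fact get is $\fpt(\fra_s)\le\lct(\fra)$, because if $\lct(\fra)<\fpt(\fra_s)$ then $\tau(\fra_s^{\lct(\fra)})=\cO_{X_s}\not\subseteq \cJ(\fra^{\lct(\fra)})_s\subsetneq\cO_{X_s}$, a contradiction. Thus $\fpt(\fra_s)\leq\lct(\fra)$ for all closed points $s$.

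Next, apply Theorem~\ref{thm_main1} to $\lambda=\lct(\fra)$, obtaining positive integers $N$ and $p_0$ such that for every closed point $s\in\Spec A$ with $\mathrm{char}(k(s))>p_0$ and every $F$-jumping number $\mu<\lct(\fra)$ of $\fra_s$,
\[
\lct(\fra)-\mu \geq \frac{1}{\mathrm{char}(k(s))^N}.
\]
Now, whenever $\fpt(\fra_s)\neq\lct(\fra)$, the previous paragraph gives $\fpt(\fra_s)<\lct(\fra)$. Since $\fpt(\fra_s)$ is by definition an $F$-jumping number of $\fra_s$, we can take $\mu=\fpt(\fra_s)$ in the displayed inequality and obtain the claimed bound. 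Finally, to remove the auxiliary hypothesis $\mathrm{char}(k(s))>p_0$, choose a nonzero $a\in A$ whose image in $\Spec\ZZ$ avoids every prime $\leq p_0$ (as in the end of the proof of Theorem~\ref{thm_main1}); replacing $A$ by $A_a$ ensures that every closed point has residue characteristic larger than $p_0$.

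There is essentially no obstacle in this deduction: all of the substantive work is contained in Theorem~\ref{thm_main1} and in the Hara--Yoshida containment Theorem~\ref{thm1_HY}. The only point requiring care is the verification, via Theorem~\ref{thm1_HY}, that $\fpt(\fra_s)\leq\lct(\fra)$ after shrinking $A$, so that in the case $\fpt(\fra_s)\neq\lct(\fra)$ one really does produce an $F$-jumping number of $\fra_s$ strictly below $\lct(\fra)$ to which the theorem can be applied.
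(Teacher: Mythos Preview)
Your proposal is correct and follows the same approach as the paper, namely applying Theorem~\ref{thm_main1} with $\lambda=\lct(\fra)$ and using that (after shrinking $A$) one has $\fpt(\fra_s)\leq\lct(\fra)$, so that $\fpt(\fra_s)$ is an $F$-jumping number $<\lambda$ whenever it differs from $\lct(\fra)$. Your second paragraph, however, should be rewritten: the stream-of-consciousness with visible false starts is not acceptable in a final proof, and you should simply state directly that Theorem~\ref{thm1_HY} gives $\tau(\fra_s^{\lct(\fra)})\subseteq\cJ(\fra^{\lct(\fra)})_s\subsetneq\cO_{X_s}$ (after shrinking $A$ so that this reduction remains a proper ideal), forcing $\fpt(\fra_s)\leq\lct(\fra)$.
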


\section{Lower bounds for $F$-jumping numbers}

In this section we assume that $X$ is a smooth 
scheme of finite type over an algebraically closed field $k$ of characteristic zero. We consider 
an everywhere nonzero ideal sheaf $\fra$ in $\cO_X$ and
fix a model over a finitely generated $\ZZ$-algebra $A\subset k$ for $X$, $\fra$, and the multiplier ideals of $\fra$. Our goal in this section is to prove the following theorem.

\begin{theorem}\label{thm_main2}
With the above notation, for every $\lambda>0$  there is 
$C\in\RR_{>0}$ such that after possibly replacing $A$ by a localization $A_a$,
for every closed point $s\in\Spec A$ we have
\begin{equation}\label{eq1_thm_main2}
\cJ(\fra^{\lambda-\frac{C}{p_s}})_s=\tau(\fra_s^{\lambda-\frac{C}{p_s}}),
\end{equation}
where $p_s={\rm char}(k(s))$. 
\end{theorem}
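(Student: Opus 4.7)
By Theorem~\ref{thm1_HY}, after possibly replacing $A$ by a localization we have
$\tau(\fra_s^{\mu})\subseteq\cJ(\fra^{\mu})_s$ for every closed $s\in\Spec A$ and every
$\mu\in\RR_{\ge 0}$; specialized at $\mu=\lambda-C/p_s$, this gives one of the two
inclusions in \eqref{eq1_thm_main2}. My plan is to establish the reverse inclusion
$\cJ(\fra^{\lambda-C/p_s})_s\subseteq\tau(\fra_s^{\lambda-C/p_s})$, for a constant $C$
depending only on $\lambda$ and a choice of log resolution of $\fra$, by an
\emph{effective} version of the Frobenius-trace method used by Hara--Yoshida to prove
Theorem~\ref{thm2_HY}.

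\emph{Reduction and perturbation computation.} Working locally, I may assume that $X$ is
affine. Skoda's theorem for both multiplier and test ideals, which is compatible with
reduction mod $p$, reduces the problem to the case $\lambda<\dim X$, placing $\lambda$
in a bounded range. Fix a log resolution $\pi\colon Y\to X$ of $\fra$, write
$\fra\cdot\cO_Y=\cO_Y(-D)$ with $D=\sum_i a_iE_i$ and $K_{Y/X}=\sum_i k_iE_i$, and
spread out the entire datum over $A$ so that \eqref{reduction_multiplier} holds for all
$\mu<\dim X$ and all closed $s$. Set $a:=\max_i a_i$, let
$$I_\lambda:=\{\,i:\lambda a_i\in\ZZ\,\},\qquad F_\lambda:=\sum_{i\in I_\lambda}E_i,$$
and take $C:=a+1$. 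For $p=p_s$ large enough (excluding finitely many small primes via a
further localization of $A$), a direct floor computation gives
$\lfloor(\lambda-C/p)D\rfloor=\lfloor\lambda D\rfloor-F_\lambda$, since along every
$E_i$ with $\lambda a_i\in\ZZ$ the floor drops by exactly one, while along the remaining
$E_i$ the quantity $Ca_i/p$ is too small to cross an integer. Consequently,
$$\cJ(\fra^{\lambda-C/p})_s=(\pi_s)_*\cO_{Y_s}\!\bigl(K_{Y_s/X_s}-\lfloor\lambda D_s\rfloor+(F_\lambda)_s\bigr),$$
and the ``excess'' divisor $(F_\lambda)_s$ is exactly what the Frobenius-trace argument
below will consume.

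\emph{Frobenius-trace step and main obstacle.} The substantive content of the proof is
to imitate Hara--Yoshida on $Y_s$: the SNC hypothesis on $\sum_i(E_i)_s$ yields, for
each $e\ge 1$, a trace/splitting morphism associated to the $e$-th iterated Frobenius on
the log pair $(Y_s,\sum_i(E_i)_s)$. Twisting appropriately and pushing forward to $X_s$
produces a map
$$(\pi_s)_*\cO_{Y_s}\!\bigl(K_{Y_s/X_s}-\lfloor\lambda D_s\rfloor+(F_\lambda)_s\bigr)\;\longrightarrow\;\bigl(\fra_s^{\lceil p^e(\lambda-C/p)\rceil}\bigr)^{[1/p^e]}\;\subseteq\;\tau(\fra_s^{\lambda-C/p}),$$
whose composition with the inclusion $\tau\subseteq\cJ$ of Theorem~\ref{thm1_HY} is the
identity on the pushforward above; this gives the desired containment. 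The divisor
$(F_\lambda)_s$ is precisely what absorbs the integer-rounding gap between
$\lceil p^e(\lambda-C/p)\rceil$ and $p^e\lfloor(\lambda-C/p)D\rfloor$ that arises on
taking Frobenius $p^e$-th roots, and the bound $C>a$ guarantees this absorption
uniformly in $s$. The principal difficulty is that the original Hara--Yoshida argument
is asymptotic: it yields an open set $V_\lambda\subseteq\Spec A$ on which equality holds
for a fixed $\lambda$, with no control over its complement. To promote it to an
effective statement in which $\lambda-\mu$ scales as $O(1/p_s)$, one must track the
Frobenius iteration count at which the trace morphism surjects onto the relevant
pushforward, uniformly in $s$, and verify that the threshold $C\asymp\max_i a_i$
creates exactly the slack needed along \emph{every} $E_i$ at which $\lfloor\lambda
D\rfloor$ can drop.
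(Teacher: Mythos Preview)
Your outline and the floor computation are fine, but there is a genuine gap at precisely the step you label ``the principal difficulty.'' Asserting the existence of a trace/splitting morphism and that ``$(F_\lambda)_s$ is precisely what absorbs the integer-rounding gap'' is not a proof; the whole content of the theorem is making that step uniform in $s$, and you have not done so. In particular, your choice $C=\max_i a_i+1$ is determined only by the floor computation, which is the easy part; nothing in your argument explains why this $C$ suffices for the surjectivity you need, uniformly over all closed points.

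The paper fills this gap with a different mechanism. After reducing to $\fra$ principal (Remark~\ref{rem_reduction}), it invokes a concrete cohomological criterion (Proposition~\ref{criterion}): the equality $\tau({\fra'}^\alpha)=\cJ({\fra'}^\alpha)$ on a fixed fiber follows once certain groups
\[
H^i\bigl(Y',\Omega^{n-i}_{Y'}(\log E')(-E'+\lceil p^\ell G'\rceil)\bigr)
\]
vanish for all relevant $i$ and $\ell\ge 0$, where $G'=\mu(D'-Z')$ with $D'$ a small ample perturbation. The new ingredient you are missing is an auxiliary integral divisor $H$ on $Y$, ample over $X$, with $-H$ effective and supported on $E$. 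Uniform Serre vanishing, made uniform over $\Spec A$ via Castelnuovo--Mumford regularity (Lemma~\ref{lem1_thm_main2} and Corollary~\ref{cor5_thm_main2}), produces a threshold $m_1$ independent of $s$. One then chooses $C$ so that $\tfrac{C}{3}\min\{a_i/h_i:h_i>0\}\ge m_1$, sets $\mu=\lambda-\tfrac{C}{2p}$, and picks $\eta\in\QQ_{>0}$ (depending on $p$) so that $G'=\mu(\eta H'-Z')$ satisfies both $\lceil G'\rceil=\lceil-\mu Z'\rceil$ and $p^\ell\mu\eta\ge m_1$ for every $\ell\ge 1$. The vanishings for $\ell\ge 1$ then come from uniform Serre vanishing, and the single base case $\ell=0$ in condition (B) is handled by an Akizuki--Nakano-type vanishing valid after reduction mod $p$ (via liftability to $W_2$). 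Note that the constant $C$ is governed by the Serre-vanishing threshold $m_1$ and the coefficients $h_i$ of $-H$, not merely by $\max_i a_i$; your floor computation alone cannot see this.
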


\begin{remark}\label{rem0_thm_main2}
Note that the interesting inclusion in (\ref{eq1_thm_main2}) in ``$\subseteq$",
since the reverse one can be guaranteed using Theorem~\ref{thm1_HY}.
\end{remark}

\begin{remark}\label{rem_thm_main2}
With the notation in the above theorem, we may assume that for every closed point $s\in
\Spec A$, we have $\lambda-\frac{C}{p_s}\geq\lambda'$, where $\lambda'$ 
is the largest jumping number of $\fra$ that is $<\lambda$ (with the convention that
$\lambda'=0$ if there is no such jumping number). In this case, the ideal on the left-hand side of
(\ref{eq1_thm_main2})
is $\cJ(\fra^{\lambda'})_s$. 
\end{remark}

\begin{corollary}\label{cor_thm_main2}
With the notation in Remark~\ref{rem_thm_main2}, if $\lambda$ is a jumping number of $\fra$,
then there is $C>0$ such that after possibly replacing $A$ by a localization $A_a$,
the following holds: 
for every closed point $s\in\Spec A$,
there is an $F$-jumping number $\mu\in (\lambda',\lambda]$ for $\fra_s$,
and for every such $\mu$ we have
$$\lambda-\mu\leq\frac{C}{p_s}.$$
\end{corollary}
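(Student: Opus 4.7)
The plan is to stitch together Theorem~\ref{thm_main2} at the exponent $\lambda$ with Theorem~\ref{thm2_HY} at $\lambda$ itself, using the fact that a strict inclusion of coherent sheaves on $X$ descends, after a mild localization of $A$, to a strict inclusion on every closed fiber.

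First, I would invoke Theorem~\ref{thm_main2} together with Remark~\ref{rem_thm_main2} to obtain a constant $C>0$ and, after replacing $A$ by a localization, the property that $\lambda - C/p_s \in [\lambda',\lambda)$ for every closed point $s \in \Spec A$, together with
$$\tau(\fra_s^{\lambda-C/p_s}) \;=\; \cJ(\fra^{\lambda-C/p_s})_s \;=\; \cJ(\fra^{\lambda'})_s,$$
the second equality holding because no jumping number of $\fra$ lies in the open interval $(\lambda',\lambda)$. Shrinking $A$ further by Theorem~\ref{thm2_HY} applied at the jumping number $\lambda$, I may also assume $\tau(\fra_s^{\lambda}) = \cJ(\fra^{\lambda})_s$ for every closed $s$.

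Second, since $\lambda$ is a jumping number, the coherent sheaf $Q := \cJ(\fra^{\lambda'})/\cJ(\fra^{\lambda})$ on $X$ is nonzero. Spreading $Q$ to a coherent sheaf $Q_A$ on the model $X_A$ and applying generic flatness over $\Spec A$, I shrink $A$ once more so that $Q_A$ is flat over $A$ and the image of the support of $Q_A$ in $\Spec A$ is all of $\Spec A$; then $Q_s \neq 0$ for every closed $s$, which forces the strict inclusion $\cJ(\fra^{\lambda})_s \subsetneq \cJ(\fra^{\lambda'})_s$ at every closed point. Combining with the identifications above yields
$$\tau(\fra_s^{\lambda}) \;\subsetneq\; \tau(\fra_s^{\lambda-C/p_s}),$$
so the test ideal of $\fra_s$ strictly drops somewhere on $(\lambda-C/p_s,\lambda]$; since the $F$-jumping numbers of $\fra_s$ form a discrete set and test ideals are right-continuous in the exponent, this produces an $F$-jumping number $\mu$ of $\fra_s$ in that interval, hence in $(\lambda',\lambda]$.

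To finish, I would verify that every $F$-jumping number $\mu \in (\lambda',\lambda]$ of $\fra_s$ satisfies $\mu > \lambda - C/p_s$. Suppose otherwise that $\mu \leq \lambda - C/p_s$, and choose $\epsilon > 0$ small enough that $\mu - \epsilon > \lambda'$. Using Theorem~\ref{thm1_HY} together with the fact that $\cJ(\fra^{\nu}) = \cJ(\fra^{\lambda'})$ for every $\nu \in [\lambda',\lambda)$,
$$\tau(\fra_s^{\mu-\epsilon}) \;\subseteq\; \cJ(\fra^{\mu-\epsilon})_s \;=\; \cJ(\fra^{\lambda'})_s \;=\; \tau(\fra_s^{\lambda-C/p_s}) \;\subseteq\; \tau(\fra_s^{\mu}),$$
contradicting the strict inclusion $\tau(\fra_s^{\mu}) \subsetneq \tau(\fra_s^{\mu-\epsilon})$ required by the definition of an $F$-jumping number. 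The only nonroutine step is the generic-flatness observation, ensuring that the strict inclusion of multiplier ideals is preserved on every closed fiber; everything else is direct bookkeeping on top of Theorems~\ref{thm_main2}, \ref{thm2_HY}, and \ref{thm1_HY}.
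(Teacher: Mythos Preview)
Your argument is correct and follows essentially the same route as the paper: use Theorem~\ref{thm_main2} to get equality at $\lambda-\tfrac{C}{p_s}$, use Theorem~\ref{thm1_HY} to sandwich the test ideals for exponents in $[\lambda',\lambda-\tfrac{C}{p_s}]$ between two copies of $\cJ(\fra^{\lambda'})_s$, and then use the strict drop at $\lambda$ to produce an $F$-jumping number in $(\lambda-\tfrac{C}{p_s},\lambda]$. One small redundancy: you do not need Theorem~\ref{thm2_HY} at $\lambda$, since the inclusion $\tau(\fra_s^{\lambda})\subseteq\cJ(\fra^{\lambda})_s$ from Theorem~\ref{thm1_HY} already suffices to conclude $\tau(\fra_s^{\lambda})\subsetneq\tau(\fra_s^{\lambda-C/p_s})$; conversely, your explicit generic-flatness step for the strict inclusion $\cJ(\fra^{\lambda})_s\subsetneq\cJ(\fra^{\lambda'})_s$ spells out something the paper leaves implicit.
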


\begin{proof}
It follows from 
Theorem~\ref{thm1_HY} that we may assume
$\tau(\fra_s^{\alpha})\subseteq\cJ(\fra^{\alpha})_s$ for every $\alpha\in\RR_{>0}$. 
Suppose now that the conclusion of Theorem~\ref{thm_main2} holds.
In this case, for every closed point $s\in\Spec A$ and every 
$\alpha$ with
 $\lambda'\leq \alpha\leq\lambda-\frac{C}{p_s}$, we have
 \begin{equation}\label{eq_cor_thm_main2}
 \cJ(\fra^{\lambda-\frac{C}{p_s}})=\tau(\fra_s^{\lambda-\frac{C}{p_s}})\subseteq
 \tau(\fra_s^{\alpha})\subseteq\tau(\fra_s^{\lambda'})\subseteq\cJ(\fra^{\lambda'})_s.
 \end{equation}
 Since $\cJ(\fra^{\lambda-\frac{C}{p_s}})=\cJ(\fra^{\lambda'})_s$, we conclude that
all inclusions in (\ref{eq_cor_thm_main2}) are equalities. In particular, there is no 
$F$-jumping number for $\fra_s$ in $(\lambda',\lambda-\frac{C}{p_s}]$.

On the other hand, since $\lambda$ is a jumping number for $\fra$, we have
$$\tau(\fra_s^{\lambda})\subseteq \cJ(\fra^{\lambda})_s\subsetneq\cJ(\fra^{\lambda-\frac{C}{p_s}})_s=\tau(\fra^{\lambda-\frac{C}{p_s}}_s).$$ Therefore there is an $F$-jumping number of $\fra_s$ in the interval
$(\lambda-\frac{C}{p_s},\lambda]$, and we thus obtain both assertions in the corollary. 
\end{proof}

\begin{corollary}\label{cor2_thm_main2}
With the notation in Theorem~\ref{thm_main2}, there is $C>0$ such that after possibly replacing $A$ by a localization $A_a$, we have $\lct(\fra)-\fpt(\fra_s)<\frac{C}{p_s}$
for every closed point $s\in\Spec A$.
\end{corollary}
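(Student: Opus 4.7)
The plan is to deduce Corollary~\ref{cor2_thm_main2} as an essentially immediate consequence of Theorem~\ref{thm_main2} (or equivalently of Corollary~\ref{cor_thm_main2}) by specializing to $\lambda=\lct(\fra)$. First I would note that if $\fra=\cO_X$ there is nothing to prove, so we may assume $\lct(\fra)$ is a (the smallest) jumping number of $\fra$. Applying Theorem~\ref{thm_main2} with this $\lambda$, I obtain $C>0$ and, after shrinking $\Spec A$, the equality
\[
\cJ(\fra^{\lct(\fra)-\frac{C}{p_s}})_s=\tau\!\left(\fra_s^{\lct(\fra)-\frac{C}{p_s}}\right)
\]
for every closed point $s\in\Spec A$.

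Next I would invoke Remark~\ref{rem_thm_main2}: since $\lct(\fra)$ is the smallest jumping number of $\fra$, the previous jumping number is $\lambda'=0$, and after a further localization we may assume $\lct(\fra)-\frac{C}{p_s}\geq 0$ and that the left-hand side above equals $\cJ(\fra^{0})_s=\cO_{X_s}$. Hence
\[
\tau\!\left(\fra_s^{\lct(\fra)-\frac{C}{p_s}}\right)=\cO_{X_s},
\]
and by the defining property of the $F$-pure threshold this forces
\[
\fpt(\fra_s)>\lct(\fra)-\frac{C}{p_s},
\]
which rearranges to the desired strict inequality $\lct(\fra)-\fpt(\fra_s)<\frac{C}{p_s}$.

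There is no hard step here, since all the work is absorbed in Theorem~\ref{thm_main2}; the only thing to take care of is the bookkeeping of the finitely many localizations of $A$ needed to simultaneously guarantee the conclusion of that theorem, the identification $\cJ(\fra^{0})_s=\cO_{X_s}$, and (if one wishes to state the bound uniformly) the condition $p_s>C/\lct(\fra)$ so that $\lct(\fra)-C/p_s>0$. All of these can be arranged by passing to a single nonempty open $\Spec A_a$, and the resulting constant $C$ is the same one produced by Theorem~\ref{thm_main2}.
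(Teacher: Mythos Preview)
Your proposal is correct and follows essentially the same route as the paper: apply Theorem~\ref{thm_main2} with $\lambda=\lct(\fra)$, observe that the left-hand side is $\cO_{X_s}$, and conclude $\fpt(\fra_s)>\lct(\fra)-\frac{C}{p_s}$. The paper's proof is a terse two-line version of exactly this argument; your additional remarks about the trivial case $\fra=\cO_X$ and the bookkeeping of localizations are fine but not strictly necessary.
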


\begin{proof}
Applying Theorem~\ref{thm_main2} with $\lambda=\lct(\fra)$, we see that we may assume that
$\tau(\fra_s^{\lambda-\frac{C}{p_s}})=\cO_{X_s}$, hence $\fpt(\fra_s)>\lambda-\frac{C}{p_s}$.
\end{proof}

\begin{remark}\label{rem2_thm_main2}
The assertion in Theorem~\ref{thm_main2} is only interesting when
$\lambda$ is a jumping number of $\fra$.
Indeed, otherwise we can find $\epsilon>0$ such that 
$\cJ(\fra^{\lambda})=\cJ(\fra^{\lambda-\epsilon})$. By applying Theorem~\ref{thm2_HY}, we see that we may assume that for all closed points $s\in\Spec A$ we
 have 
 $$\tau(\fra_s^{\lambda-\epsilon})=\cJ(\fra^{\lambda-\epsilon})_s=\cJ(\fra^{\lambda})_s=
 \tau(\fra_s^{\lambda}).$$
 If we consider a localization $A_a$ of $A$ such that for every closed point $s\in\Spec A$
 we have ${\rm char}(k(s))>\frac{1}{\epsilon}$, it is clear that the conclusion of the theorem holds 
by taking $C=1$.
\end{remark}

\begin{remark}\label{rem_reduction}
In order to prove Theorem~\ref{thm_main2}, we may assume that $X$ is affine and irreducible, and $\fra$ is a principal ideal.
Indeed, after taking an open affine cover, we reduce to the case when $X$ is affine and irreducible.
Consider now generators $g_1,\ldots,g_m$ for the ideal $\fra$. If $M>\lambda$ is an integer,
and $h_1,\ldots,h_M$ are general linear combinations of the $g_i$ with coefficients in $k$,
then 
$$\cJ(\fra^{\alpha})=\cJ(h^{\alpha/M})$$
for every $\alpha<M$, where $h=h_1\cdots h_M$
(see \cite[Proposition~9.2.28]{positivity}). If the theorem holds for $h$ and $\lambda/M$, then we 
can find $C'>0$ such that after replacing $A$ by a localization
$$\cJ(h^{\frac{\lambda}{M}-\frac{C'}{p_s}})_s=\tau(h_s^{\frac{\lambda}{M}-\frac{C'}{p_s}})$$
for every closed point $s\in \Spec A$. Using the fact that $h\in\fra^M$, we now obtain
$$\cJ(\fra^{\lambda-\frac{C'M}{p_s}})_s=\cJ(h^{\frac{\lambda}{M}-\frac{C'}{p_s}})_s
=\tau(h_s^{\frac{\lambda}{M}-\frac{C'}{p_s}})
\subseteq\tau(\fra_s^{\lambda-\frac{C'M}{p_s}}),$$
hence we obtain the inclusion ``$\subseteq$" in Theorem~\ref{thm_main2} if we take $C=C'M$,
while the reverse inclusion is trivial (see Remark~\ref{rem0_thm_main2}).
\end{remark}

Before giving the proof of Theorem~\ref{thm_main2}, we recall the criterion from \cite{HY} that guarantees the equality of multiplier ideals and test ideals in a fixed positive characteristic. 
This is the heart of the proof of Theorem~\ref{thm2_HY}. We start by describing the setting. 

Suppose that $X'$ is a smooth, irreducible, $n$-dimensional affine scheme over a perfect 
field $L$ of characteristic $p>0$. We assume that we have a log resolution $\pi'\colon Y'\to X'$ of  a nonzero
principal ideal $\fra'$ on $X'$. Let $Z'$ be the divisor on $Y'$ such that $\fra'\cdot\cO_{Y'}=
\cO_{Y'}(-Z')$. By assumption, there is a simple normal crossing divisor
$E'=E'_1+\ldots+E'_N$ such that both $K_{Y'/X'}$ and $Z'$ are supported on $E'$. 
Under these assumptions, we put 
$\cJ({\fra'}^{\alpha})=\pi_*\cO_{Y'}(K_{Y'/X'}-\lfloor\alpha Z'\rfloor)$.
It is shown in \cite{HY} that in this case we have $\tau({\fra'}^{\alpha})\subseteq
\cJ({\fra'}^{\alpha})$ for every $\alpha$ (this is the result that implies Theorem~\ref{thm1_HY}).

Suppose now that $\alpha\in\RR_{\geq 0}$ is fixed, and we choose $\mu>\alpha$ such that
$\cJ({\fra'}^{\alpha})=\cJ({\fra'}^{\mu})$ (note that if $Z'=\sum_ia_iZ'_i$, then it is enough to take
$\mu<\frac{\lfloor \alpha a_i\rfloor +1-\alpha a_i}{a_i}$ for all $i$ with $a_i>0$). 
Suppose also that we have a $\QQ$-divisor $D'$ on $Y'$ supported on $E'$ such that 
$D'$ is ample over $X'$ and $-D'$ is effective.
We put $G'=\mu(D'-Z')$ and assume, in addition,  
that $\lceil G'\rceil=\lceil-\mu Z'\rceil$. 
The following is the main criterion for the equality
of $\tau({\fra'}^{\alpha})$ and $\cJ({\fra'}^{\alpha})$. We denote by 
$\Omega_{Y'}^i({\rm log}\,E')$ the sheaf of $i$-differential forms on $Y'$ with log poles along $E'$. If $F'=\sum_{i=1}^N\alpha_iE'_i$ is a divisor with real coefficients, then we put
$\lceil F\rceil=\sum_{i=1}^N\lceil \alpha_i\rceil E'_i$, where $\lceil u\rceil$ denotes the smallest
integer $\geq u$.

\begin{proposition}\label{criterion}
With the above notation, if the following conditions hold:
\begin{enumerate}
\item[(A)] $H^i(Y',\Omega^{n-i}_{Y'}({\rm log}\,E')(-E'+\lceil p^{\ell} G'\rceil))=0$ for all $i\geq 1$ and $\ell\geq 1$;
\item[(B)] $H^{i+1}(Y', \Omega^{n-i}_{Y'}({\rm log}\,E')(-E'+\lceil p^{\ell} G'\rceil))=0$ for all $i\geq 1$ and $\ell\geq 0$,
\end{enumerate}
then $\tau({\fra'}^{\alpha})=\cJ({\fra'}^{\alpha})$. 
\end{proposition}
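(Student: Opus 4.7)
Since Theorem~\ref{thm1_HY} already yields the inclusion $\tau({\fra'}^{\alpha})\subseteq \cJ({\fra'}^{\alpha})$ for free, the real content of the proposition is the reverse inclusion $\cJ({\fra'}^{\alpha})\subseteq \tau({\fra'}^{\alpha})$. By the projection formula, together with the hypothesis $\cJ({\fra'}^{\alpha})=\cJ({\fra'}^{\mu})$ and the assumption $\lceil G'\rceil=\lceil -\mu Z'\rceil$, one identifies
$$\cJ({\fra'}^{\alpha})\otimes\omega_{X'}\;\cong\;\pi'_{*}\,\omega_{Y'}\bigl(\lceil G'\rceil\bigr).$$
In the regular affine $F$-finite setting, $\tau({\fra'}^{\mu})\otimes\omega_{X'}$ is, for $\ell\gg 0$, the image of an iterated Frobenius trace
$$T^{\ell}\colon \pi'_{*}F^{\ell}_{*}\,\omega_{Y'}\bigl(\lceil p^{\ell}G'\rceil\bigr)\longrightarrow \pi'_{*}\,\omega_{Y'}\bigl(\lceil G'\rceil\bigr),$$
the $\lceil p^{\ell}G'\rceil$-rounding on $Y'$ being the birational incarnation of the $\lceil\mu p^{\ell}\rceil$-rounding that enters the definition of $\tau$ on $X'$. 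Because $\mu>\alpha$ gives $\tau({\fra'}^{\mu})\subseteq \tau({\fra'}^{\alpha})$, the proposition will follow once I establish surjectivity of $T^{\ell}$ for some (equivalently every) $\ell\gg 0$.

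For the surjectivity the plan is to run a Deligne--Illusie style argument on the pair $(Y',E')$. For each $r\geq 0$, the logarithmic Cartier isomorphism decomposes, in the derived category of $\cO_{Y'}$-modules,
$$F_{*}\bigl(\Omega^{\bullet}_{Y'}(\log E')(-E'+\lceil p^{r+1}G'\rceil)\bigr)\;\simeq\;\bigoplus_{i=0}^{n}\Omega^{i}_{Y'}(\log E')\bigl(-E'+\lceil p^{r}G'\rceil\bigr)[-i],$$
the compatibility of the twists across Frobenius being guaranteed by $G'$ being supported on $E'$. Feeding this decomposition into the hypercohomology spectral sequence, hypothesis~(A) kills the summands indexed by $i\geq 1$ at every intermediate level $r\geq 1$, while hypothesis~(B) handles the base case $r=0$ and absorbs the residual $-E'$ shift. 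A decreasing induction on $r$ from $r=\ell$ down to $r=0$ then yields a canonical isomorphism
$$H^{0}\bigl(Y',F^{\ell}_{*}\omega_{Y'}(\lceil p^{\ell}G'\rceil)\bigr)\;\xrightarrow{\;\sim\;}\;H^{0}\bigl(Y',\omega_{Y'}(\lceil G'\rceil)\bigr)$$
realized by $T^{\ell}$, and globalizing over the affine base $X'$ yields the required surjectivity of $\pi'_{*}T^{\ell}$.

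The main obstacle lies in the inductive step above: one must carefully track the log rounding, the boundary maps of the truncation spectral sequences, and the interaction between the Cartier decomposition and the $-E'$ twist, all while ensuring that the line bundles involved stay in the regime where (A) and (B) are imposed. This is precisely where the $\pi'$-ampleness of $D'$ and the effectivity of $-D'$ enter: they guarantee that $-G'=\mu(Z'-D')$ lies in the range in which the Cartier decomposition behaves well, and they also ensure the vanishing of the higher direct images $R^{i}\pi'_{*}$ of the relevant sheaves, so that the surjectivity established on $Y'$ descends to $X'$. Once this bookkeeping is under control, identifying the image of $\pi'_{*}T^{\ell}$ with $\tau({\fra'}^{\mu})\otimes\omega_{X'}$ is a routine translation between the Frobenius trace on $X'$ and its birational counterpart on $Y'$.
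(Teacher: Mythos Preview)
The paper itself does not prove this proposition; it refers to \cite{HY} and to the exposition in \cite[p.~16--18]{BHMM}. Your sketch is broadly in the spirit of those sources---reduce to surjectivity of an iterated trace $T^\ell$ on $\omega$-twists, then establish that surjectivity by a descending induction on $\ell$ using the Cartier operator together with the vanishings (A) and (B). But there is a genuine gap at the step you single out as the engine of the induction.

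You assert that ``the logarithmic Cartier isomorphism decomposes, in the derived category of $\cO_{Y'}$-modules,'' the complex $F_*\bigl(\Omega^\bullet_{Y'}(\log E')(-E'+\lceil p^{r+1}G'\rceil)\bigr)$ as a direct sum of shifts of the $\Omega^i$ at level $r$. Such a derived splitting is the content of the Deligne--Illusie theorem, and it requires both $p>n$ and a lifting of $(Y',E')$ to $W_2(L)$. Neither hypothesis is available here: the proposition is stated over an arbitrary perfect field $L$ of characteristic $p>0$, and in the proof of Theorem~\ref{thm_main2} the $W_2$-lifting is invoked separately, only to verify the single vanishing (B) at $\ell=0$ at one auxiliary closed point. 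What the Cartier operator gives unconditionally is an isomorphism on \emph{cohomology sheaves}, $\mathcal H^i\bigl(F_*\Omega^\bullet_{Y'}(\log E')\bigr)\cong\Omega^i_{Y'}(\log E')$ (and its twisted variants), not a splitting of the complex.

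The argument in \cite{HY} compensates with exactly this weaker input. One runs the two hypercohomology spectral sequences for the twisted log de~Rham complex: the first (stupid filtration) spectral sequence has $E_1$-terms equal to the groups appearing in (A) and (B), so those hypotheses force degeneration and identify the top hypercohomology with $H^0\bigl(Y',\omega_{Y'}(\lceil p^\ell G'\rceil)\bigr)$; the second (conjugate) spectral sequence, via the Cartier isomorphism on cohomology sheaves, relates the hypercohomology at level $\ell$ to the groups at level $\ell-1$. Chaining these yields the surjectivity of $T^\ell$. In short, (A) and (B) are not there to ``kill summands'' of a splitting you already have---they are what \emph{replaces} the splitting. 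If you rewrite your inductive step in terms of the two spectral sequences rather than a derived decomposition, the rest of your outline is on the right track.
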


We refer to \cite{HY} for a proof of this result. For a somewhat simplified version of the argument,
using our definition of test ideals, see the presentation in \cite[\S 4]{BHMM}.
We can now prove the main result of this section.

\begin{proof}[Proof of Theorem~\ref{thm_main2}]
It is easy to see that the assertion in the theorem is independent of the models we have chosen.
The point is that if $A\subset B$ is an inclusion of finitely generated $\ZZ$-algebras,
and $s\in\Spec A$ is the image of the closed point $t\in\Spec B$, then we have a finite field extension
$k(s)\subseteq k(t)$; if we have models of $X$ and $\fra$ over both $A$ and $B$ such that
$X_B\simeq X_A\times_{\Spec A}\Spec B$, then it
follows from the description of test ideals in \cite[Proposition~2.5]{BMS2} that
$\tau(\fra_t^{\mu})=\tau(\fra_s^{\mu})\otimes_{k(s)}k(t)$.

In particular, we may change the log resolution, and we may assume that all divisors that we define in characteristic zero have models over $A$. We also note that by
Remark~\ref{rem2_thm_main2}, we may assume that $\lambda$
is a jumping number of $\fra$, hence it is rational. Finally, by Remark~\ref{rem_reduction},
we may and will assume that $X$ is affine and irreducible, and $\fra$ is principal.

By assumption, we have a log resolution $\pi\colon Y\to X$ 
of $\fra$ that admits a model over $A$,
such that construction of multiplier ideals commutes with taking the fiber over the closed points 
in $\Spec A$. Since we may choose a convenient log resolution, we may assume that
$\pi$ is an isomorphism over $X\smallsetminus V(\fra)$.
Let $Z$ be the divisor on $Y$ such that $\fra\cdot\cO_{Y}=\cO_{Y}(-Z)$.
By assumption, $Z$ is supported on a
simple normal crossing divisor $E=E_1+\ldots+E_N$ on $Y$, 
such that the support of $K_{Y/X}$ is also contained in $E$.  

We fix an integral divisor $H$ on $Y$ that is ample over $X$ and such that 
$-H$ is effective and supported on $E$
(for example, if we express $Y$
as the blow-up of $X$ along some closed subscheme $T$, then 
we may choose $H=-\pi^{-1}(T)$). 
Let us write 
$$Z=\sum_{i=1}^Na_iE_i\,\,\text{and}\,\,-H=\sum_{i=1}^Nh_iE_i,$$
so by assumption $a_i>0$ and $h_i\geq 0$ for all $i$. 

\begin{lemma}\label{lem1_thm_main2}
With the above notation, if $\cF$ is a coherent sheaf on $Y$, there is $m_0$ such that
after possibly replacing $A$ by a localization $A_a$, we have
$$H^i(Y_s,\cF_s\otimes \cO_{Y_s}(mH_s))=0$$
for all $i\geq 1$, all $m\geq m_0$, and all closed points $s\in\Spec A$. 
\end{lemma}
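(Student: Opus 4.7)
The plan is to combine relative Serre vanishing on the projective morphism $\pi_A\colon Y_A\to X_A$ with cohomology and base change, and then to conclude via the Leray spectral sequence using the affineness of $X_s$ established in Remark~\ref{rem_reduction}. Throughout I allow myself to replace $A$ by a localization $A_a$ finitely many times, and, after shrinking $A$, to choose a model $\cF_A$ of $\cF$ on $Y_A$.

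First, I would apply generic flatness to assume that $\cF_A$ is flat over $A$; since $H_A$ is a Cartier divisor, every twist $\cF_A\otimes\cO_{Y_A}(mH_A)$ is then flat over $A$ as well. Because $\pi_A$ is projective (being a log resolution) and $H_A$ is $\pi_A$-ample, relative Serre vanishing produces a single integer $m_0$ such that
\[
R^i\pi_{A*}\bigl(\cF_A\otimes\cO_{Y_A}(mH_A)\bigr)=0\quad\text{on }X_A
\]
for every $m\ge m_0$ and every $i\ge 1$. The fact that one $m_0$ serves for all such $m$ reflects finite generation of the graded module $\bigoplus_{m}R^i\pi_{A*}(\cF_A\otimes\cO_{Y_A}(mH_A))$ over the graded $\cO_{X_A}$-algebra $\bigoplus_{m}\pi_{A*}\cO_{Y_A}(mH_A)$.

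Next I would propagate this vanishing to every closed fiber. Flatness of $\cF_A\otimes\cO_{Y_A}(mH_A)$ over $A$, together with cohomology and base change applied by descending induction on $i$ starting from the trivial vanishing of $R^i$ in degrees above the relative dimension of $\pi_A$, shows that after a further shrinking of $\Spec A$ the natural base-change map
\[
R^i\pi_{A*}\bigl(\cF_A\otimes\cO_{Y_A}(mH_A)\bigr)\otimes k(s)\longrightarrow R^i(\pi_s)_*\bigl(\cF_s\otimes\cO_{Y_s}(mH_s)\bigr)
\]
is an isomorphism for every closed point $s\in\Spec A$, every $m\ge m_0$, and every $i\ge 1$. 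In particular the right-hand side vanishes throughout this range.

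Finally, after possibly shrinking $A$ once more we may assume each $X_s$ is affine. The Leray spectral sequence for $\pi_s\colon Y_s\to X_s$ then collapses, so
\[
H^i\bigl(Y_s,\cF_s\otimes\cO_{Y_s}(mH_s)\bigr)=H^0\bigl(X_s,R^i(\pi_s)_*(\cF_s\otimes\cO_{Y_s}(mH_s))\bigr)=0
\]
for all $i\ge 1$ and $m\ge m_0$, as required. The main technical point is the base-change step: a naive application for a single $m$ would only control one twist at a time, whereas we need one $m_0$ and one localization of $A$ that work uniformly for every $m\ge m_0$. Flatness of $\cF_A$ over $A$, together with Grauert's theorem (equivalently, the uniform Castelnuovo--Mumford regularity bound for coherent sheaves in flat projective families), is what supplies this uniformity.
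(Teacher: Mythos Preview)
Your approach---relative Serre vanishing for $\pi_A$, then base change to the fibers, then Leray on the affine $X_s$---is a genuine alternative to the paper's argument, and it does work, but one step needs more care than you give it. The paper instead works directly on the fibers: after reducing to $\cO_Y(H)$ very ample over $X$, ordinary Serre vanishing in characteristic zero produces $m_0$ with $H^i(Y,\cF(mH))=0$ for all $i\geq 1$ and $m\geq m_0$; one then shrinks $A$ a \emph{finite} number of times so that the same vanishings hold on every fiber $Y_s$ for the finitely many twists $m_0\le m\le m_0+n-1$. These are exactly the vanishings that make $\cF_s$ Castelnuovo--Mumford $(m_0+n)$-regular with respect to $\cO_{Y_s}(H_s)$, and regularity then propagates the vanishing to all larger $m$, fiber by fiber. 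The trick of using CM regularity to convert ``infinitely many $m$'' into ``finitely many $m$'' is the whole point; no base-change theorem for an infinite family of sheaves is needed.

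The technical wrinkle in your argument is the base-change step. The Hartshorne-style descending induction you invoke requires the sheaf to be flat over the base of the proper morphism, i.e.\ over $X_A$, whereas you have only arranged flatness over $A$. This is repairable: once $X_A$ and $Y_A$ are also flat over $A$, the inclusion $X_s\hookrightarrow X_A$ is Tor-independent of $Y_A$ over $X_A$, so derived base change applies and the vanishing of $R^i\pi_{A*}\cG$ for $i\geq 1$ (with $\cG$ flat over $A$) forces $R^i(\pi_s)_*\cG_s=0$---uniformly in $m$, with no further shrinking. Equivalently, one can argue directly with a \v{C}ech complex for $\pi_A$ over an affine $X_A$: its terms are $A$-flat and its higher cohomology vanishes, so tensoring with $k(s)$ preserves exactness in positive degrees. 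Your closing appeal to ``uniform CM regularity in flat families'' is essentially the paper's method; if that is what you mean to invoke, the relative Serre vanishing on the total space becomes superfluous.
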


\begin{proof}
We may assume that $\cO_Y(H)$ is very ample over $X$:
indeed, there is $d\geq 1$ such that $\cO_Y(dH)$ is very ample over $X$,
and it is enough to prove the assertion in the lemma for $\cO_Y(dH)$ and each of the sheaves
$\cF\otimes\cO_Y(jH)$, with $0\leq j\leq d-1$. We now assume that $\cO_Y(H)$ is very ample over $X$. By asymptotic Serre vanishing, there is $m_0$ such that 
$H^i(Y,\cF\otimes \cO_{Y}(mH))=0$ for every $i\geq 1$ and every $m\geq m_0$.
After possibly replacing $A$ by some localization $A_a$, we may assume that
$H^i(Y_s,\cF_s\otimes \cO_{Y_s}(mH_s))=0$ for every $i\geq 1$, every $m$ with
$m_0\leq m\leq m_0+n-1$, and every closed point $s\in\Spec A$.
For every such $s$, the sheaf $\cF_s$ on $Y_s$ is $(m_0+n)$-regular
with respect to $\cO_{Y_s}(H_s)$ in the sense of Castelnuovo-Mumford regularity (we refer to \cite[\S 1.8]{positivity} for the basic facts on Castelnuovo-Mumford regularity). In this case
$\cF_s$ is $m$-regular for every $m\geq m_0+n$, and we obtain all the vanishings in the statement of the lemma by definition of Castelnuovo-Mumford regularity.
\end{proof}

\begin{corollary}\label{cor5_thm_main2}
With the above notation, if $\cF$ is a coherent sheaf on $Y$, $d$ is a positive integer, 
and $\gamma$ is a positive rational number,
then there is $m_1$ such that 
$$H^i(Y_s,\cF_s(\lceil m\gamma H_s-qZ_s\rceil)=0$$
for all $m\geq \frac{m_1}{\gamma}$, all $q\in\QQ$ with $qd\in\ZZ$, all $i\geq 1$, and all closed points
$s\in\Spec A$. 
\end{corollary}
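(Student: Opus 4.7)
The plan is to exploit the principality of $\fra$ (afforded by the reduction in Remark~\ref{rem_reduction}) to \emph{trivialize} the line bundle $\cO_Y(-Z)=\fra\cdot\cO_Y$: multiplication by $\pi^*f$ provides an isomorphism $\cO_Y \xrightarrow{\sim} \cO_Y(-Z)$ of line bundles, and the same holds on every fiber $Y_s$ via $\pi_s^*f_s$. This dissolves the $q$-dependence at the level of cohomology and reduces the corollary to Lemma~\ref{lem1_thm_main2} applied to a \emph{finite} family of sheaves.

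First I carry out an arithmetic decomposition. Writing $\gamma=r/e$ in lowest terms, set $M:=\lfloor m\gamma\rfloor$, $Q:=\lfloor q\rfloor$, $\theta:=m\gamma-M\in\tfrac{1}{e}\ZZ\cap[0,1)$, and $\phi:=q-Q\in\tfrac{1}{d}\ZZ\cap[0,1)$. A coefficient-by-coefficient computation at each $E_i$ (using that $Mh_i+Qa_i$ is already an integer) yields
\[
\lceil m\gamma H - qZ\rceil \;=\; MH - QZ + F_{\theta,\phi}, \qquad F_{\theta,\phi}:=\sum_{i=1}^{N}\lceil -\theta h_i - \phi a_i\rceil\, E_i.
\]
Since $F_{\theta,\phi}$ depends only on the pair $(\theta,\phi)$, it ranges over a finite set of divisors (of cardinality at most $ed$) as $m$ and $q$ vary.

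Using the triviality above, $\cO_{Y_s}(-QZ_s)\simeq\cO_{Y_s}$ for every $Q\in\ZZ$, so the coherent sheaf $\cF_s(\lceil m\gamma H_s - qZ_s\rceil)$ is (abstractly) isomorphic to $\cF_s(F_{\theta,\phi})\otimes\cO_{Y_s}(MH_s)$ on $Y_s$. I then apply Lemma~\ref{lem1_thm_main2} to each of the finitely many coherent sheaves $\cF\otimes\cO_Y(F_{\theta,\phi})$; after finitely many further localizations of $A$, this produces a single integer $m_0$ such that
\[
H^i\bigl(Y_s,\,\cF_s(F_{\theta,\phi})\otimes\cO_{Y_s}(MH_s)\bigr) = 0
\]
for all $i\geq 1$, all $M\geq m_0$, all pairs $(\theta,\phi)$, and all closed points $s\in\Spec A$. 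Setting $m_1:=m_0+1$ ensures $M=\lfloor m\gamma\rfloor\geq m_0$ whenever $m\geq m_1/\gamma$, and the isomorphism above transports the vanishing to the sheaf of interest.

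The only nontrivial step is the recognition that, once $\fra$ is principal, $\cO_Y(QZ)$ is a trivial line bundle for every $Q\in\ZZ$ (of either sign), so the $-qZ$ twist contributes nothing at the level of abstract isomorphism class; this is what allows $m_1$ to be chosen uniformly in $q$ and is the crux of why the statement needs $\fra$ principal. The remainder is elementary combinatorics on divisors together with the uniform Serre-type vanishing of Lemma~\ref{lem1_thm_main2}.
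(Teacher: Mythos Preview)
Your proof is correct and follows essentially the same route as the paper: decompose $m\gamma H - qZ$ into $\lfloor m\gamma\rfloor H - \lfloor q\rfloor Z$ plus a fractional part $T$ that takes only finitely many values (since both $m\gamma$ and $q$ have bounded denominators), observe that the $\lfloor q\rfloor Z$ twist is cohomologically invisible because $Z$ is pulled back from the affine base (equivalently, $\cO_Y(-Z)\simeq\cO_Y$ via $\pi^*f$), and then apply Lemma~\ref{lem1_thm_main2} to the finitely many sheaves $\cF(\lceil T\rceil)$. Your explicit parametrization by $(\theta,\phi)$ and the bookkeeping with $m_1=m_0+1$ are minor notational variants of the paper's argument.
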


\begin{proof}
By assumption, both $m\gamma$ and $q$ have bounded denominators, hence
$m\gamma H-qZ$ can be written as $\lfloor m\gamma\rfloor H-\lfloor q\rfloor Z+T$, where when  we vary $m$ and $q$, the $\QQ$-divisor $T$ can take  only finitely many values
$T_1,\ldots,T_r$. Furthermore, note that since $\fra$ is assumed to be principal, $Z$ is the pull-back of a divisor from $X$. Since $X$ is affine, it follows that 
$H^i(Y_s,\cF_s(\lceil m\gamma H_s-qZ_s\rceil))=0$ if $H^i(Y_s,\cF_s(\lceil T\rceil+\lfloor m\gamma 
\rfloor H))=0$. Hence we obtain our assertion by applying Lemma~\ref{lem1_thm_main2}
to each of the sheaves $\cF(\lceil T_i\rceil)$, for $1\leq i\leq r$.
\end{proof}

We now return to the proof of Theorem~\ref{thm_main2}.
We first apply Corollary~\ref{cor5_thm_main2} to choose $m_1$ such that
after possibly replacing $A$ by a localization, we have 
\begin{equation}\label{eq1_pf_thm_main2}
H^i(Y_s,\Omega^j_{Y_s}({\rm log}\,E_s)(-E_s+\lceil  m\gamma H_s-qZ_s\rceil))=0
\end{equation}
for all closed points $s\in\Spec A$, all $i\geq 1$, $j\geq 0$, $m\geq m_1/\gamma$, and $q\in\QQ$ with $qd\in\ZZ$, where $\gamma=\frac{1}{3}\cdot\prod_{h_i>0}\frac{1}{h_i}$ and 
$d\in 2\ZZ$ is such that $\lambda d\in\ZZ_{>0}$.  

Let $C\in\ZZ_{>0}$ be such that $\frac{C}{3}\cdot\min\left\{\frac{a_i}{h_i}\mid h_i>0\right\}\geq m_1$. 
After possibly replacing $A$ by a localization $A_a$, we may assume that 
for every closed point $s\in\Spec A$, the characteristic $p_s$ of $k(s)$ is large enough.
In particular, we may assume that $\cJ(\fra^{\lambda-\frac{C}{p_s}})=
\cJ(\fra^{\lambda-\frac{C}{2p_s}})$ and that
\begin{equation}\label{eq2_pf_thm_main2}
\left\lfloor\left(\lambda-\frac{C}{2p_s}\right)a_i\right\rfloor=\lceil\lambda a_i\rceil -1
\,\,\text{for all}\,\,i\,\,\text{with}\,\, 1\leq i\leq N.
\end{equation}

Suppose now that $s\in\Spec A$ is a closed point. We
use primes to denote the corresponding varieties and divisors obtained after restricting
the models over $\Spec k(s)$, and write $p=p_s$. 
In order to show that the condition in the theorem is satisfied,
it is enough to show that we may apply Proposition~\ref{criterion} to 
$\alpha=\lambda-\frac{C}{p}$,
$\mu=\lambda-\frac{C}{2p}$, and $G'=\mu(\eta H'-Z')$, where $\eta\in\QQ_{>0}$ is given by
\begin{equation}\label{eq3_pf_main_thm2}
\eta=\frac{C}{3p\mu}\min\left\{\frac{a_i}{h_i}\mid h_i>0\right\}.
\end{equation}

In order to show that $\lceil G'\rceil=\lceil -\mu Z'\rceil$, it is enough to show that
\begin{equation}\label{eq4_pf_thm_main2}
-\mu\eta h_i-\mu a_i>\lceil -\mu a_i\rceil -1\,\,\text{for all}\,\,i\leq N.
\end{equation}
This is clear if $h_i=0$, hence let us assume that $h_i>0$. 
Using (\ref{eq2_pf_thm_main2}) and (\ref{eq3_pf_main_thm2}), we obtain
$$\mu\eta h_i\leq \frac{Ca_i}{3p}<\frac{Ca_i}{2p}\leq \lceil\lambda a_i\rceil -\left(\lambda-
\frac{C}{2p}\right)a_i=\lfloor\mu a_i\rfloor-\mu a_i+1,$$
which gives (\ref{eq4_pf_thm_main2}).

Arguing as in the proof of \cite[Theorem~3.4]{HY}, it is easy to guarantee the vanishing in 
(B) for $\ell=0$. Indeed, after possibly replacing $A$ by a localization, we may assume that
$$\dim_{k(s)}H^{i+1}(Y', \Omega^{n-i}_{Y'}({\rm log}\,E')(-E'+\lceil  G'\rceil))$$
is independent of the closed point $s\in\Spec A$. On the other hand, there is such a 
closed point $t\in\Spec A$
with the property that ${\rm char}(k(t))> \dim(X)$ and $Y_t$ and $E_t$ admit liftings to the second ring of Witt vectors
$W_2(k(t))$ of $k(t)$. In this case, a version of the Akizuki-Nakano vanishing theorem
(see \cite[Corollary~3.8]{Hara}) gives
$$H^{i+1}(Y_t, \Omega^{n-i}_{Y_t}({\rm log}\,E_t)(-E_t+\lceil  G_t\rceil))=0,$$
where $G=\left(\lambda-\frac{C}{2p}\right)(\eta H-Z)$. 

On the other hand, the other vanishings in (A) and (B) hold by our choice of $m_1$.
Indeed, for $\ell\geq 1$ we have 
$p^{\ell}G'=p^{\ell}\mu\eta H'-p^{\ell}\mu Z')$, and $p^{\ell}\mu\eta\geq p\mu\eta\geq m_1$ by the definition
of $C$. Moreover, $p^{\ell}\mu d=p^{\ell-1}(p\lambda d-\frac{C d}{2})\in\ZZ$ and $p^{\ell}\mu\eta\in\ZZ\gamma$.
Therefore we may apply Proposition~\ref{criterion} to obtain the assertion in the theorem.
\end{proof}

\section{Bounds for Hartshorne-Speiser-Lyubeznik numbers}

In this section we apply the results in \S 3 to get bounds for the Hartshorne-Speiser-Lyubeznik numbers. We start we recalling the definition of these numbers. 

Let $(S,\frn,k)$ be a $d$-dimensional Noetherian local ring of characteristic $p>0$. Given any $S$-module $M$, a $p$-\emph{linear structure} on $M$ is an additive map $\varphi\colon M\to M$ such that 
$\varphi(az)=a^p\varphi(z)$ for all $a\in S$ and $z\in M$. For example, the Frobenius endomorphism on $S$ gives a $p$-linear structure on $S$, and it also
induces by functoriality a $p$-linear structure $\Theta$ on the top local cohomology module $H^d_{\frn}(S)$.

We recall the following theorem, originally proved by Hartshorne-Speiser 
\cite[Proposition 1.11]{HS} and later generalized by
 Lyubeznik \cite[Proposition 4.4]{Lyubeznik} (see also
\cite{Sharp} for a simplified proof).

\begin{theorem}\label{HSL_theorem}
Let $(S,\frn,k)$ be a Noetherian local ring of characteristic $p>0$,
and let $M$ be an Artinian $S$-module with a $p$-linear structure $\varphi\colon M\to M$. If each element $z\in M$ is nilpotent under $\varphi$, then $M$ is nilpotent under $\varphi$, i.e., there exists a positive integer $\ell$ such that $\varphi^{\ell}(M)=0$.
\end{theorem}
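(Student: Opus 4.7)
The plan is to show $\varphi^\ell(M)=0$ for some $\ell$ by studying two complementary filtrations of $M$. First, define $N_i := \{z \in M : \varphi^i(z) = 0\}$. Because $\varphi^i(sz) = s^{p^i}\varphi^i(z)$, each $N_i$ is an $S$-submodule of $M$, and $N_0 \subseteq N_1 \subseteq \cdots$ is an ascending chain whose union is $M$ by hypothesis; the theorem is equivalent to the assertion that this chain stabilizes.

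To bring Artinianity to bear, I would pass to the dual descending chain $M^{(n)} := S\cdot\varphi^n(M)$. Since $\varphi^{n+1}(M) \subseteq \varphi^n(M)$, these form a descending chain of $S$-submodules, and hence stabilize at some $M_\infty := M^{(n_0)}$ by Artinianity. A short calculation---writing any element of $M_\infty = M^{(n_0+1)} = S\varphi(M^{(n_0)})$ as $\sum s_i\varphi(m_i)$ with $m_i \in M^{(n_0)} = M_\infty$---yields the auxiliary identity $S\varphi(M_\infty) = M_\infty$. Once I establish $M_\infty = 0$, then $\varphi^{n_0}(M) \subseteq M_\infty = 0$ forces $M = N_{n_0}$, completing the proof.

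The core of the argument is therefore the vanishing $M_\infty = 0$, and this is where the pointwise nilpotency hypothesis must interact nontrivially with the $S$-surjectivity $S\varphi(M_\infty)=M_\infty$. The route I would take is to complete $S$ at $\frn$ (which preserves Artinianity of $M$, the $p$-linear structure, and pointwise nilpotency) and apply Matlis duality: the Artinian module $M_\infty$ corresponds to a finitely generated $\hat S$-module $D = \Hom_{\hat S}(M_\infty, E)$, where $E$ is the injective hull of the residue field; the identity $S\varphi(M_\infty) = M_\infty$ dualizes to an injection of $D$ into its Frobenius pullback $F^*D$; and the nilpotency of each element of $M_\infty$ dualizes to the vanishing of the intersection of the images of the iterated maps $D \to F^{*n}D$. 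A Krull intersection / Nakayama-type argument applied to the finitely generated module $D$ then forces $D = 0$, hence $M_\infty = 0$.

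I expect the main obstacle to be the careful bookkeeping in this duality, since $\varphi$ is $p$-linear rather than $S$-linear and the dualized map lives most naturally over a Frobenius-twisted module; one must check that the translations of ``$S$-surjectivity on $M_\infty$'' and ``pointwise nilpotency'' really do yield, respectively, injectivity of $D \to F^*D$ and an intersection condition to which a Nakayama argument applies. An alternative, less conceptual route avoids duality entirely: by Artinianity, pick a minimal non-zero submodule $K \subseteq M_\infty$ satisfying $S\varphi(K) = K$, and then analyze the $\varphi$-action on the finite-dimensional $k$-vector space $\mathrm{Soc}(K) \subseteq \mathrm{Soc}(M)$, arguing that the $S$-surjectivity of $\varphi$ on $K$ produces a non-nilpotent element in this socle, contradicting the hypothesis.
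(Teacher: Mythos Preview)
The paper does not give its own proof of this theorem: it is quoted from the literature, with references to Hartshorne--Speiser \cite{HS}, Lyubeznik \cite{Lyubeznik}, and Sharp \cite{Sharp}. So there is no in-paper argument to compare your proposal against.

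That said, your outline is consistent with the proofs in those references. The reduction via the descending chain $M^{(n)}=S\cdot\varphi^{n}(M)$ to the claim $M_\infty=0$, together with the identity $S\varphi(M_\infty)=M_\infty$, is exactly the standard opening move. Your first route to $M_\infty=0$, via completion and Matlis duality to a finitely generated module with a structure map into its Frobenius pullback, is the viewpoint underlying Lyubeznik's $F$-module approach; your second route, via a minimal $\varphi$-stable submodule and a socle analysis over a coefficient field, is closer in spirit to the Hartshorne--Speiser and Sharp arguments. You are right that the delicate point in the duality route is translating the $p$-linear (rather than $S$-linear) structure correctly; and in the socle route, you should be aware that $\varphi$ does not in general preserve the socle, so the phrase ``analyze the $\varphi$-action on $\mathrm{Soc}(K)$'' hides real work --- one typically uses the Fitting-type decomposition for a $p$-semilinear endomorphism of a finite-dimensional $k$-vector space after arranging that the relevant map does land in the socle. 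Both routes can be completed, but neither is as short as your sketch suggests; since the paper simply cites the result, consulting \cite{Sharp} for the cleanest self-contained argument would be the efficient choice.
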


The above theorem has the following immediate consequence.
Let $\varphi$ be a $p$-linear structure on an $S$-module $M$ and set 
$$N_i=\{z\in M\mid \varphi^i(z)=0\}.$$
When $M$ is an Artinian $S$-module, it follows from Theorem~\ref{HSL_theorem}
that the ascending chain of submodules 
$$\cdots \subseteq N_i\subseteq N_{i+1}\subseteq \cdots$$
eventually stabilizes, i.e. there is an integer $\ell$ such that $N_{\ell}=N_{\ell+j}$ for all $j\geq 1$.
This motivates the following definition.

\begin{definition}[Hartshorne-Speiser-Lyubeznik number]
With the above notation, if
$M$ is an Artinian $S$-module and $\varphi$ is a $p$-linear structure on $M$, then the  Hartshorne-Speiser-Lyubeznik number (HSL number, for short) of $(M,\varphi)$ is the smallest
positive integer $\ell$ such that $N_{\ell}=N_{\ell+j}$  for all $j\geq 1$.
\end{definition}

\begin{remark}
For every complete Noetherian local ring $T$, Matlis duality gives an inclusion-reversing one-to-one correspondence between the ideals of $T$ and the submodules of 
the injective hull $E_T$ of the residue field of $T$.
This correspondence is given by 
$$I \to {\rm Ann}_{E_T}I=\{u\in E_T\mid Iu=0\},\,\,\text{with inverse}\,\,M\to 
 {\rm Ann}_TM.$$
Therefore the stabilization of an ascending chain of submodules 
$\cdots\subseteq N_i\subseteq N_{i+1}\subseteq \cdots$ of $E_T$ is equivalent to the stabilization of $\cdots\supseteq I_i\supseteq I_{i+1}\supseteq \cdots$ where $I_i=
{\rm Ann}_TN_i$.
\end{remark}

The injective hull $E_S$ of the residue field of $S$ is of particular interest due to the following remark.

\begin{remark}
Suppose that $S$ is a complete local Noetherian ring of characteristic $p>0$, hence by Cohen's theorem $S$ is a homomorphic image of a complete regular local ring $R$, i.e. $S \simeq R/I$. In this case there is a one-to-one correspondence between $(I^{[p]}\colon I)/I^{[p]}$ and the set of $p$-linear structures 
$\varphi$ on $E_S$, as follows.

By Cohen's theorem,
we may assume that $R=k\llbracket x_1,\dots,x_n\rrbracket$, for a field $k$. 
The injective hull $E_R$ 
is isomorphic to $H^n_{\frm}(R)$, where $\frm=(x_1,\dots,x_n)$. 
The natural $p$-linear structure $F$ on $E_R$ is given by
$$\left[\frac{r}{x^{j_1}_1\cdots x^{j_n}_n}\right] \to 
\left[\frac{r^p}{x^{j_1p}_1\cdots x^{j_np}_n}\right]$$
for each cohomology class $\left[\frac{r}{x^{j_1}_1\cdots x^{j_n}_n}\right]\in 
H^n_{\frm}(R)\simeq R_{x_1\cdots x_n}/\sum_{i=1}^nR_{x_1\cdots \widehat{x_i}\cdots x_n}$.  
Given an element $u\in (I^{[p]}\colon I)$, we claim that $uF$ induces a $p$-linear structure on 
$E_S$. Note that $E_S$ can be identified with ${\rm Ann}_{E_R}I$. Given any $z\in E_S$, i.e. an element in $E_R$ that is annihilated by $I$, we have 
$$IuF(z)\subseteq I^{[p]}F(z)=F(Iz)=F(0)=0,$$
hence $uF(z)$ is an element of ${\rm Ann}_{E_R}I=E_S$. Therefore the restriction of $uF$ to
 ${\rm Ann}_{E_R}I$ gives a $p$-linear structure on $E_S$.
 
 It is an easy exercise, using Matlis duality, to check that $uF$ gives the trivial $p$-linear structure on $E_S$ if and only if $u\in I^{[p]}$.
On the other hand, Manuel Blickle (\cite[Chapter 3]{Blickle}) has shown that every $p$-linear structure on $E_S$ comes from an element in $(I^{[p]}\colon I)/I^{[p]}$. 
\end{remark}

\begin{remark}\label{completion}
Suppose now that $R$ is a regular local ring of positive characteristic, and
$S=R/I$, for some ideal $I$ in $R$. If we denote by $\widehat{R}$ and $\widehat{S}$ 
the completions of $R$ and $S$, respectively, then $E_S=E_{\widehat{S}}$,
and the $p$-linear structures of this module over $S$ and over $\widehat{S}$ can be identified. 
Using the fact that $\widehat{R}$ is flat over $R$, we deduce from the previous remark that
the $p$-linear structures on $E_S$ are in bijection with $(\widehat{I}^{[p]}\colon\widehat{I})/
\widehat{I}^{[p]}$, where $\widehat{I}=I\widehat{R}$. In particular, this set contains 
$(I^{[p]}\colon I)/I^{[p]}$. 
\end{remark}

\begin{example}
Let $R$ be an $n$-dimensional regular local ring of characteristic $p>0$, let $f\in R$ be nonzero and noninvertible, and $S=R/(f)$. We denote by $\frn$ and $\frm$ the maximal ideals in $S$ and $R$,
respectively. We have $E_R\simeq H^n_{\frm}(R)$ and 
$E_S\simeq{\rm Ann}_{E_R}(f)\simeq 
H^{n-1}_{\frn}(S)$ (the second isomorphism is a consequence of the exact sequence in the diagram below). 
Let $\Theta$ be the natural $p$-linear structure on $E_S$ induced by the Frobenius morphism on $S$. From the commutative diagram
$$
\xymatrix{
0  \ar[r] & H^{n-1}_{\frn}(S) \ar[r] \ar[d]^{\Theta} & H^n_{\frm}(R) \ar[r]^{f} \ar[d]^{f^{p-1}F} & H^n_{\frm}(R) \ar[r] \ar[d]^F & 0\\
0  \ar[r] & H^{n-1}_{\frn}(S) \ar[r]  & H^n_{\frm}(R) \ar[r]^{f}  & H^n_{\frm}(R) \ar[r] & 0,
}
$$
one sees immediately that $\Theta$ is the restriction of $f^{p-1}F$ to $E_S$.
Note also that for every $i\geq 1$, we have
$$\{u\in E_R\mid (f^{p-1}F)^i(u)=0\}\subseteq E_S.$$
Indeed, if $(f^{p-1}F)^i(u)=0$, then $f^{p^i-1}F^i(u)=0$, hence $f^{p^i}F^i(u)=F^i(fu)=0$,
and since $F$ is injective on $E_R$, we deduce $fu=0$, hence $u\in E_S$. 

We thus conclude that determining the HSL number of $(E_S,\Theta)$ is equivalent to determining the one of $(E_R,f^{p-1}F)$.
\end{example}

We recall that if $R$ is a regular $F$-finite domain of characteristic $p>0$, and
$f\in R$ is nonzero, then the test ideal $\tau(f^{m/p^e})$ has a simple
description  (see \cite[Lemma 2.1]{BMS1}):
\begin{equation}\label{eq_description_test}
\tau(f^{m/p^e})=(f^m)^{[1/p^e]}.
\end{equation}

\begin{proposition}\label{description_HSL1}
Let $R$ be a regular $F$-finite local ring of characteristic $p>0$, and let $f\in R$ be nonzero. If we put 
 $N_{i}=\{z\in E_R\mid (f^{p-1}F)^i(z)=0\}$ for $i\geq 1$, then
$${\rm Ann}_R(N_{i})=\tau(f^{(p^i-1)/p^i}).$$
\end{proposition}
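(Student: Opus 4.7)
The plan is to combine three ingredients: the formula $\tau(f^{(p^i-1)/p^i}) = (f^{p^i-1})^{[1/p^i]}$ recalled in (\ref{eq_description_test}); Matlis duality for a complete regular local ring; and the explicit description of $(\cdot)^{[1/p^i]}$ via an $R^{p^i}$-basis as in (\ref{formula_test_ideal}). Completion preserves all the relevant structures (test ideals, the module $E_R$, and the $p$-linear structure, see Remark~\ref{completion}), and annihilators behave well under the faithfully flat extension $R \to \widehat R$, so I may assume $R$ is complete; by the Cohen structure theorem and $F$-finiteness, $R = k\llbracket x_1, \dots, x_n\rrbracket$ with $k$ an $F$-finite field.

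Set $J := (f^{p^i-1})^{[1/p^i]} = \tau(f^{(p^i-1)/p^i})$. By Matlis duality in the complete local setting, the claim ${\rm Ann}_R(N_i) = J$ is equivalent to the submodule equality $N_i = {\rm Ann}_{E_R}(J)$. The inclusion ``$N_i \supseteq {\rm Ann}_{E_R}(J)$'' is immediate: if $Jz = 0$, then for each $a \in J$ one has $a^{p^i} F^i(z) = F^i(az) = 0$, so $J^{[p^i]} F^i(z) = 0$; since the defining property of $J = (f^{p^i-1})^{[1/p^i]}$ forces $f^{p^i-1} \in J^{[p^i]}$, we conclude $f^{p^i-1} F^i(z) = 0$, i.e., $z \in N_i$.

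For the reverse inclusion $N_i \subseteq {\rm Ann}_{E_R}(J)$, which is the main technical step, I fix an $R^{p^i}$-basis $u_1, \dots, u_r$ of $R$ (for instance, the monomials $x^{\epsilon}$ with $0 \leq \epsilon_j \leq p^i-1$, multiplied by a $k^{p^i}$-basis of $k$), and expand $f^{p^i-1} = \sum_{j=1}^r a_j^{p^i} u_j$, so that $J = (a_1, \dots, a_r)$ by (\ref{formula_test_ideal}). For $z \in N_i$ the identity $f^{p^i-1} F^i(z) = 0$ then rewrites as
\[
\sum_{j=1}^r u_j \cdot F^i(a_j z) \;=\; 0 \quad \text{in } E_R = H^n_{\frm}(R).
\]
Working in the standard presentation of $E_R$ with $k$-basis $\{x^{-\beta} : \beta \in \ZZ_{\geq 1}^n\}$, and observing that $F^i(E_R)$ is spanned by monomials of the form $x^{-p^i \beta}$, the key point is that every exponent tuple $\gamma$ with $\gamma_j \leq -1$ admits a \emph{unique} decomposition $\gamma = \epsilon - p^i \beta$ with $0 \leq \epsilon_j \leq p^i-1$ and $\beta \in \ZZ_{\geq 1}^n$. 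Consequently the contributions $u_j \cdot F^i(a_j z)$ occupy disjoint ``slots'' of $E_R$, so vanishing of the sum forces $F^i(a_j z) = 0$ for each $j$ separately; since $F^i$ is injective on $E_R$ (as $R$ is a regular domain), this gives $a_j z = 0$ for every $j$, hence $Jz = 0$.

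The main obstacle is precisely this last unique-monomial-decomposition step: it is where one must use the regularity of $R$ beyond just its flatness over $R^{p^i}$, encoding the interaction between multiplication by the $R^{p^i}$-basis elements and the Frobenius action on the injective hull. The easy direction and the Matlis-duality reformulation are straightforward once the right ideal $J$ is identified via (\ref{eq_description_test}).
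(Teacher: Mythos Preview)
Your argument is correct but takes a different, more hands-on route than the paper. The paper reduces to the complete case and then invokes \cite[Theorem~4.6]{Katzman} as a black box: that result identifies ${\rm Ann}_R N_i$ as the smallest ideal $J$ with $(f)\subseteq J$ and $f^{p^i-1}\in J^{[p^i]}$, which is $(f^{p^i-1})^{[1/p^i]}$ by definition. You instead reformulate via Matlis duality as $N_i={\rm Ann}_{E_R}(J)$ and verify both inclusions directly, the nontrivial one through the explicit monomial description of $E_R$ for a power series ring. What your approach buys is self-containment and transparency: the unique decomposition $\gamma=\epsilon-p^i\beta$ is precisely the dual, on $E_R$, of the freeness of $F^i_*R$ over $R$ (Kunz), and this is exactly what forces the summands $u_j\cdot F^i(a_jz)$ to vanish individually. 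The paper's approach is shorter and places the statement within the general parameter-test-ideal framework. One small refinement worth making explicit in your write-up: when $k$ is imperfect, basis elements $u_j$ sharing the same monomial part $x^{\epsilon}$ do contribute to the same exponent slot $\gamma$, so ``disjoint slots'' is not literally true; you then need the $k^{p^i}$-linear independence of the chosen $k^{p^i}$-basis of $k$ to conclude each $c_{j,\beta}^{p^i}=0$, which you already have available.
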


\begin{proof}
Note first that since taking the test ideal commutes with completion, we may assume that
$R$ is complete. We then use \cite[Theorem~4.6]{Katzman} (and its proof) with
$I=(f)$ and $u=f^{p-1}$ to get that 
${\rm Ann}_RN_{i}$ is the smallest ideal $J_i$ of $R$ such that
\begin{enumerate}
\item[i)] $(f)\subseteq J_i$, and
\item[ii)]  $(f^{p-1})^{\frac{p^i-1}{p-1}}=f^{p^i-1}$ is contained in $J_i^{[p^i]}$.
\end{enumerate}
By definition, this says that ${\rm Ann}_RN_i=(f^{p^i-1})^{[1/p^i]}$, and the formula in the proposition follows from (\ref{eq_description_test}).
\end{proof}

\begin{corollary}
\label{connection-HSL-number-juming-number}
Let $R$ and $f$ be as in Proposition~\ref{description_HSL1}, with $f$ noninvertible.
The HSL number of $(E_R, f^{p-1}F)$ ${\rm (}$or equivalently, that of 
$(E_{R/(f)},\Theta)$${\rm )}$ is the smallest positive integer $\ell$ such that 
\[\tau( f^{1-\frac{1}{p^{\ell}}})=\tau(f^{1-\frac{1}{p^{\ell+i}}})\]
for all $i\geq 1$.
\end{corollary}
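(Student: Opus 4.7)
The plan is to combine Proposition~\ref{description_HSL1} with a standard Matlis duality argument that converts an ascending chain of submodules of the injective hull into a descending chain of ideals.

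First I would recall that the HSL number of $(E_R,f^{p-1}F)$ is by definition the smallest $\ell$ with $N_\ell=N_{\ell+j}$ for every $j\ge 1$, where $N_i=\{z\in E_R\mid (f^{p-1}F)^i(z)=0\}$. By Proposition~\ref{description_HSL1} the annihilator ideal of $N_i$ in $R$ is exactly $\tau(f^{(p^i-1)/p^i})=\tau(f^{1-1/p^i})$. So the statement to prove reduces to showing that the ascending chain $N_1\subseteq N_2\subseteq\cdots$ stabilizes precisely when the descending chain of ideals $\mathrm{Ann}_R(N_1)\supseteq\mathrm{Ann}_R(N_2)\supseteq\cdots$ stabilizes, and at the same index.

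Next I would invoke the Matlis duality correspondence recalled in the preceding remarks: after passing to the completion $\widehat R$, submodules of $E_R=E_{\widehat R}$ are in inclusion-reversing bijection with ideals of $\widehat R$ via $M\mapsto\mathrm{Ann}_{\widehat R}M$. In particular for the chain at hand, $N_\ell=N_{\ell+j}$ is equivalent to $\mathrm{Ann}_{\widehat R}(N_\ell)=\mathrm{Ann}_{\widehat R}(N_{\ell+j})$. Since test ideals commute with completion (noted when test ideals were introduced in \S 2), we have $\mathrm{Ann}_{\widehat R}(N_i)=\tau(f^{1-1/p^i})\widehat R$, and by faithful flatness of $R\to\widehat R$ an equality of ideals in $\widehat R$ of this form is equivalent to the corresponding equality in $R$. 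Putting the two translations together identifies the HSL number with the smallest $\ell$ such that $\tau(f^{1-1/p^\ell})=\tau(f^{1-1/p^{\ell+j}})$ for all $j\ge 1$, which is exactly the claim.

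There is essentially no obstacle: the content is packaged into Proposition~\ref{description_HSL1} (the identification of the annihilators with test ideals) and into Matlis duality. The only small care needed is the passage to the completion when applying Matlis duality, but this is harmless because both sides of the desired equality behave well under $-\otimes_R\widehat R$.
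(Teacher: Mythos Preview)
Your proposal is correct and follows essentially the same route as the paper's proof: both use Proposition~\ref{description_HSL1} to identify $\mathrm{Ann}_R(N_i)$ with $\tau(f^{1-1/p^i})$, pass to the completion so that Matlis duality gives an inclusion-reversing bijection between submodules of $E_R$ and ideals, and invoke that test ideals commute with completion. The only cosmetic difference is that the paper reduces to the complete case at the outset, whereas you work over $\widehat R$ and descend via faithful flatness at the end.
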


\begin{proof}
Since the HSL number of $(E_R, f^{p-1}F)$ does not change when we pass from $R$ to its completion, and taking test ideals commutes with completion, we may assume that $R$
is complete. In this case the assertion in the corollary follows from the definition of HSL
number for $(E_R,f^{p-1}F)$ via Matlis duality and
Proposition~\ref{description_HSL1}.
\end{proof}

Note that the condition in Corollary~\ref{connection-HSL-number-juming-number}
is equivalent to saying that there is no $F$-jumping number for $f$ in the interval
$\left(1-\frac{1}{p^{\ell}},1\right)$. We can now reformulate Theorem~\ref{thm_main1}
and Corollary~\ref{thm_main1_v2}, as follows.

\begin{theorem}\label{thm_main3}
Let $A$ be a finitely generated $\ZZ$-algebra, and $X$ a scheme of finite type over 
$\Spec A$, smooth of relative dimension $n$. If $\fra$ is a locally principal ideal on $X$, whose restriction to every fiber is everywhere nonzero, and if $Z$ is the closed subscheme defined by $\fra$, then there is a positive integer $N$
such that for every closed point $s\in \Spec A$, and every point $x$ in the fiber $Z_s$
of $Z$
over $s$, the HSL number of $(E_{\cO_{Z_s,x}},\Theta)$ is bounded above by $N$. 
\end{theorem}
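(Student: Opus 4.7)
The plan is to combine Corollary~\ref{connection-HSL-number-juming-number}, which reformulates the HSL number as a statement about $F$-jumping numbers close to $1$, with Theorem~\ref{thm_main1} applied at the special value $\lambda=1$ to produce the desired uniform bound on all of $\Spec A$.

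Since the conclusion is local on $X$ and $\fra$ is locally principal, I would first cover $X$ by finitely many affine opens on which $\fra$ is principal, prove the bound on each, and take the maximum. So fix such an open $U\subseteq X$ with $\fra|_U=(f)$, a closed point $s\in\Spec A$, and a point $x\in Z_s\cap U_s$. Set $R_x=\cO_{X_s,x}$ and let $f_x\in R_x$ be the image of $f$, so that $\cO_{Z_s,x}=R_x/(f_x)$. The ring $R_x$ is regular, since $X_s$ is smooth over $k(s)$, and it is $F$-finite, since $X_s$ is smooth over the finite (hence perfect) field $k(s)$ and $F$-finiteness is preserved under localization (and completion). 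Applying Corollary~\ref{connection-HSL-number-juming-number} to $R_x$ and $f_x$ then identifies the HSL number of $(E_{\cO_{Z_s,x}},\Theta)$ with the smallest $\ell\geq 1$ for which $f_x$ has no $F$-jumping number in the open interval $(1-1/p_s^\ell,\,1)$.

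Next, any $F$-jumping number of $f_x$ in $R_x$ is an $F$-jumping number of $\fra_s|_{U_s}$, because the formation of test ideals commutes with localization. It therefore suffices to bound from below the gap $1-\mu$ for every $F$-jumping number $\mu<1$ of $\fra_s|_{U_s}$, uniformly in the closed point $s$. This is precisely the content of Theorem~\ref{thm_main1} applied to $\fra|_U$ with $\lambda=1$. Inspecting that proof, the threshold $p_0(\lambda)$ is forced to exceed all prime divisors of the denominator of $\lambda$ and to satisfy $p_0\geq\lambda^{-1}$; for $\lambda=1$ both conditions hold vacuously, so the resulting integer $N_U$ bounds the gap for every closed point of $\Spec A$, with no need to localize $A$ or exclude small residue characteristics. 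Setting $N=\max_U N_U$ over the finite cover then gives the theorem.

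The main obstacle, as I see it, is the uniformity in $s$: since Theorem~\ref{thm_main3} does not permit replacing $A$ by a localization, one must verify that Theorem~\ref{thm_main1} at $\lambda=1$ applies to \emph{every} closed point, including those of small residue characteristic. This is precisely why $\lambda=1$ is the fortunate choice, as the ``bad denominator'' and ``$p\geq\lambda^{-1}$'' constraints in the proof of Theorem~\ref{thm_main1} both collapse. The remaining ingredients---the reformulation of the HSL number via test ideals, the compatibility of test ideals with localization and completion, and the $F$-finiteness of local rings on smooth fibers over finite fields---are routine.
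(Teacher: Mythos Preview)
Your proposal is correct and follows essentially the same approach as the paper: reinterpret the HSL number via Corollary~\ref{connection-HSL-number-juming-number}, pass from the local ring to the fiber using that test ideals commute with localization, and then invoke Theorem~\ref{thm_main1} at $\lambda=1$, noting that the constraints on $p_0$ in that proof become vacuous when $\lambda=1$ so no localization of $A$ is needed. (The paper's proof cites Theorem~\ref{thm_main2} at one point, but this is evidently a slip for Theorem~\ref{thm_main1}, as the surrounding discussion and the reference to $p_0$ make clear.)
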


\begin{proof}
The assertion follows from the above interpretation of HSL numbers, and 
Theorem~\ref{thm_main2} applied to $\lambda=1$, by noting two facts.
First, if $\frp$ is a prime ideal in a regular $F$-finite $R$ and if $\frb$ is an everywhere nonzero ideal in $R$, then the $F$-jumping numbers
of $\frb\cdot R_{\frp}$ are among the $F$-jumping numbers of $\frb$: this follows since taking multiplier ideals commutes with localization at $\frp$.
Second, we may take
$p_0=1$ (this follows by inspecting the proof of Theorem~\ref{thm_main1}).
\end{proof}

\begin{corollary}\label{cor_thm_main3}
Given $n$ and $M$, there is a positive integer $N=N(n,M)$ such that
for every $F$-finite field $k$ of characteristic $p>0$, and every nonzero polynomial
$f\in k[x_1,\ldots,x_n]$ with ${\rm deg}(f)\leq M$, the following holds:
for every prime ideal $\frp$ in $S=k[x_1,\ldots,x_n]/(f)$, the
HSL number of $(E_{S_{\frp}},\Theta)$ is bounded above by $N$. 
\end{corollary}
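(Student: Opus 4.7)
The plan is to reduce the HSL bound to a uniform lower-bound for $F$-jumping numbers and then apply Corollary~\ref{thm_main1_v2} with $\lambda=1$. This parallels the proof of Theorem~\ref{thm_main3}, but now in a uniform (non-family) setting controlled only by $n$ and $M$.

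First, let $R=k[x_1,\ldots,x_n]$ and fix a prime ideal $\frp\subset S=R/(f)$; by abuse of notation let $\frp$ also denote its preimage in $R$. By Corollary~\ref{connection-HSL-number-juming-number} (applied to the regular local ring $R_{\frp}$, which is $F$-finite since $k$ is $F$-finite), the HSL number of $(E_{S_{\frp}},\Theta)$ equals the smallest $\ell\geq 1$ such that $\tau\bigl((fR_{\frp})^{1-1/p^{\ell}}\bigr)=\tau\bigl((fR_{\frp})^{1-1/p^{\ell+j}}\bigr)$ for every $j\geq 1$, i.e.\ there is no $F$-jumping number of $fR_{\frp}$ in the open interval $(1-\tfrac{1}{p^{\ell}},1)$. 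Since taking test ideals commutes with localization, every $F$-jumping number of $fR_{\frp}$ is already an $F$-jumping number of $f\in R$, so it suffices to exhibit an integer $N=N(n,M)$ such that no $F$-jumping number of $f$ in $R$ lies in $(1-\tfrac{1}{p^N},1)$, uniformly in $k$, $p$ and $f$.

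Next, I would apply Corollary~\ref{thm_main1_v2} to $\lambda=1$. Inspecting the proof of Theorem~\ref{thm_main1}, the auxiliary prime $p_0(\lambda)$ has two requirements: it must exceed every divisor of the denominator $b$ of $\lambda=a/b$, and it must be at least $\lambda^{-1}$. For $\lambda=1$ both conditions are vacuous (one may take $p_0=1$), so Corollary~\ref{thm_main1_v2} produces an integer $N=N(n,M,1)$ such that for every $F$-finite field $k$ of arbitrary positive characteristic $p$ and every nonzero $f\in k[x_1,\ldots,x_n]$ with $\deg(f)\leq M$, every $F$-jumping number $\mu<1$ of $f$ satisfies $1-\mu\geq 1/p^N$. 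In particular, $f$ has no $F$-jumping number in $(1-\tfrac{1}{p^N},1)$.

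Combining the two previous paragraphs, this integer $N$ bounds the HSL number of $(E_{S_{\frp}},\Theta)$ uniformly over all primes $\frp\subset S$, all allowed $f$, and all $F$-finite fields $k$. The main subtlety is just to double-check that the constants $p_0$ and $N$ coming out of Corollary~\ref{thm_main1_v2} genuinely depend only on $n$, $M$ and $\lambda=1$ — this is visible in its proof, where the bound on the number of $F$-jumping numbers of $f$ below $\lambda$ was obtained from a degree bound (via \cite[Propositions~3.2 and 3.6]{BMS2}) that depends only on $n$, $M$ and $\lambda$, not on $k$, $p$, or the specific coefficients of $f$. No genuinely new ingredient is needed beyond Corollary~\ref{thm_main1_v2}, Corollary~\ref{connection-HSL-number-juming-number}, and localization compatibility of test ideals.
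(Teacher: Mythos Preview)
Your proposal is correct and follows essentially the same route as the paper: the paper leaves Corollary~\ref{cor_thm_main3} without a separate proof, but the proof of Theorem~\ref{thm_main3} (together with the remark following the corollary) makes clear that one combines the $F$-jumping-number interpretation of the HSL number (Corollary~\ref{connection-HSL-number-juming-number}), localization compatibility of test ideals, and the uniform bound from Corollary~\ref{thm_main1_v2} at $\lambda=1$, noting that $p_0=1$ suffices there. Your verification that $p_0=1$ works and that $N$ depends only on $n$, $M$ is exactly the ``inspection'' the paper alludes to.
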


\begin{remark}
By inspecting the proof of Theorem~\ref{thm_main1}, we see that we may take 
$N(n,M)$ to be $1$ plus the dimension of the vector space of polynomials in 
$k[x_1,\ldots,x_n]$ of degree $\leq M$, that is $N(n,m)={{n+M}\choose {n}}+1$.
\end{remark}

\section{Examples}

In this section we give two examples, in order to to illustrate how the $F$-jumping numbers
vary when we reduce modulo various primes. In our computations of test ideals we will use the formula 
(\ref{formula_test_ideal}) in \S 2, describing ideals of the form $\frb^{[1/p^e]}$.
We begin by recalling the following
theorem due to Lucas (see \cite{Granville}).

\begin{theorem}\label{Lucas}
Let $p$ be a positive prime integer. Given two positive integers $m$ and $n$, if we write $m=m_ep^e+m_{e-1}p^{e-1}+\cdots m_0$ and $n=n_ep^e+n_{e-1}p^{e-1}+\cdots n_0$, with $0\leq m_i,n_i\leq p-1$, then
$${{m}\choose{n}}\equiv \prod_{i=0}^e{{m_i}\choose{n_i}}\ {\rm mod}\,p.$$
\end{theorem}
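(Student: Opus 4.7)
The plan is to give the standard ``generating function in $\mathbf{F}_p[x]$'' proof, which rests on the Frobenius identity $(1+x)^p \equiv 1 + x^p \pmod{p}$.

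First I would work in the polynomial ring $\mathbf{F}_p[x]$ and establish by induction on $i$ that $(1+x)^{p^i} \equiv 1 + x^{p^i} \pmod p$; the base case $i=1$ is the ``Freshman's dream'', and the inductive step follows by raising both sides to the $p$-th power and applying the base case again. Using the base-$p$ expansion $m = \sum_{i=0}^{e} m_i p^i$, I would then write
$$
(1+x)^m \;=\; \prod_{i=0}^{e} (1+x)^{m_i p^i} \;\equiv\; \prod_{i=0}^{e} \bigl(1 + x^{p^i}\bigr)^{m_i} \pmod{p}.
$$

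Next I would compare the coefficient of $x^n$ on both sides. On the left it is $\binom{m}{n}$. On the right, expanding each factor by the binomial theorem gives
$$
\prod_{i=0}^{e} \sum_{k_i=0}^{m_i} \binom{m_i}{k_i} x^{k_i p^i},
$$
so the coefficient of $x^n$ is $\sum \prod_i \binom{m_i}{k_i}$, where the sum runs over tuples $(k_0,\dots,k_e)$ with $0 \le k_i \le m_i$ and $\sum k_i p^i = n$. The essential observation is that, because each $k_i$ lies in $\{0, 1, \dots, m_i\} \subseteq \{0, 1, \dots, p-1\}$, the equation $\sum k_i p^i = n$ forces $k_i = n_i$ by uniqueness of the base-$p$ expansion of $n$. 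Hence only one term survives, giving $\prod_i \binom{m_i}{n_i}$.

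The step that needs the most care is handling the boundary case $n_i > m_i$: in that situation there is no admissible tuple $(k_0,\dots,k_e)$, so the right side yields $0$, which is consistent with the convention $\binom{m_i}{n_i}=0$ in the stated product. Apart from this bookkeeping, the proof is essentially immediate; I do not anticipate any genuine obstacle, as the whole argument is driven by the Frobenius identity together with uniqueness of base-$p$ expansions.
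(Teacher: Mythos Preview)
Your argument is correct and is the standard proof of Lucas' theorem via the Frobenius identity in $\mathbf{F}_p[x]$ together with uniqueness of base-$p$ expansions; the handling of the case $n_i>m_i$ is also fine, since if every $n_i\le m_i$ then $n\le m$, so whenever the product would be forced to vanish the left-hand side $\binom{m}{n}$ is either genuinely zero (when $n>m$) or the argument applies as written.

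As for comparison with the paper: the paper does not actually prove this statement. It is quoted as a classical result and referenced to \cite{Granville}, so there is no ``paper's own proof'' to compare against. Your write-up supplies exactly the short self-contained argument one would expect here, and nothing more is needed.
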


\begin{remark}\label{rem_Lucas}
\label{criterion-vanishing-binom}
It is clear from Lucas' theorem that $\binom{m}{n}$ is divisible by $p$ if and only if there is an $i$ such that $n_i>m_i$. 
\end{remark}

The following proposition shows that if we want to bound the HSL number of a polynomial, as 
in Corollary~\ref{cor_thm_main3}, we need indeed to bound the degree of the polynomial.

\begin{proposition}
\label{no-upper-bound-HSL}
Let $n$ be a positive integer, and set $a=2^{n+1}+1$ and $f=x^a+y^a+z^a$. For all prime numbers $p$ such that $p\equiv 2$ ${\rm (}$mod $a$${\rm )}$, the principal ideal $(f)$,
considered as an ideal in ${\mathbb F}_p\llbracket x,y,z\rrbracket$, has an $F$-jumping number in $\left(1-\frac{1}{p^n},1-\frac{1}{p^{n+1}}\right]$.
\end{proposition}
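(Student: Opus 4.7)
The strategy is to find a single monomial that lies in $\tau(f^{1-1/p^n})$ but not in $\tau(f^{1-1/p^{n+1}})$; such a witness forces an $F$-jumping number in $(1-1/p^n, 1-1/p^{n+1}]$, which is contained in the closed interval of the statement. First, by (\ref{eq_description_test}), $\tau(f^{1-1/p^e}) = (f^{p^e-1})^{[1/p^e]}$ for each $e \geq 1$. Expanding $f^{p^e-1}$ by the multinomial formula and splitting each term $x^{ai} y^{aj} z^{ak}$ as $(x^{A_i}y^{A_j}z^{A_k})^{p^e} \cdot x^r y^s z^t$, with $A_i = \lfloor ai/p^e \rfloor$ and $r = ai \bmod p^e$ (similarly for $j, k$), formula (\ref{formula_test_ideal}) applied to the basis $\{x^r y^s z^t : 0 \le r, s, t \le p^e - 1\}$ identifies $J_e := \tau(f^{1-1/p^e})$ as the monomial ideal generated by $x^{A_i} y^{A_j} z^{A_k}$ as $(i, j, k)$ runs over nonnegative triples with $i+j+k = p^e - 1$ and $\binom{p^e-1}{i,j,k} \not\equiv 0 \pmod p$. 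Each basis coefficient is a single scaled monomial, because $\gcd(a, p) = 1$ makes the map $(i,j,k) \mapsto (ai \bmod p^e, aj \bmod p^e, ak \bmod p^e)$ injective.

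I take $x^{a-3}$ as the witness. The assumption $p \equiv 2 \pmod a$ combined with $a - 1 = 2^{n+1}$ gives $p^{n+1} \equiv -1$ and $p^n \equiv 2^n = (a-1)/2$ modulo $a$; write $p^{n+1} + 1 = aq$ and $p^n = aq' + (a-1)/2$. For a generator of $J_{n+1}$ to divide $x^{a-3}$, its triple $(i', j', k')$ must satisfy $j', k' \le q - 1$ (so the $y$- and $z$-exponents vanish) and $i' \le (a-2)q - 1$ (so the $x$-exponent is $\le a-3$); adding these gives $i' + j' + k' \le aq - 3 = p^{n+1} - 2$, which contradicts $i' + j' + k' = p^{n+1} - 1$. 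Hence $x^{a-3} \notin J_{n+1}$, regardless of any multinomial vanishing. At level $n$ the same bound is sharp: $(a-2)q' + (a-3)/2 + 2q' = p^n - 1$, forcing the unique candidate triple $(i_0, q', q')$ with $i_0 = (a-2)q' + (a-3)/2$, whose associated monomial is exactly $x^{a-3}$.

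It remains to check that $\binom{p^n - 1}{i_0, q', q'} \not\equiv 0 \pmod p$. By Remark \ref{rem_Lucas}, since $i_0 + 2q' = p^n - 1$ has all base-$p$ digits equal to $p - 1$, this reduces to the claim that every base-$p$ digit of $q'$ is at most $(p-1)/2$. The case $p = 2$ is trivial since $q' = 0$. For $p > 2$, the factorization $p^n - 2^n = (p-2) \sum_{i=0}^{n-1} 2^i p^{n-1-i}$ together with $a \mid p - 2$ yields $q' = t \cdot S_n$, where $t = (p-2)/a$ and $S_n = \sum_{i=0}^{n-1} 2^i p^{n-1-i}$. Since $p \ge a + 2 > 2^{n-1}$, the base-$p$ expansion of $S_n$ has digits $2^{n-1-i}$, and the elementary inequality $t \cdot 2^n \le p - 1$ (which clears to $(2^n + 1) p \ge 1$ after expanding $a = 2^{n+1}+1$) simultaneously ensures that scaling by $t$ produces no carries and bounds the largest digit $t \cdot 2^{n-1}$ of $q'$ by $(p-1)/2$. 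This carry-free verification is the main technical step; the rest is straightforward counting. Combining with the preceding paragraph yields $J_n \supsetneq J_{n+1}$ and the required $F$-jumping number.
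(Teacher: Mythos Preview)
Your proof is correct and follows essentially the same approach as the paper's: the same witness $x^{a-3}$, the same counting argument for the exclusion at level $n+1$, and the same triple $(i_0,q',q')$ with $q'=(p^n-2^n)/a$ for the inclusion at level $n$, with the Lucas-type check for the multinomial coefficient organized slightly differently. Your preliminary observation that $\gcd(a,p)=1$ forces the test ideals to be monomial ideals is a small addition that streamlines the argument, but the substance is the same.
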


\begin{proof}
In order to prove the proposition, it is enough to show the following two assertions:
\begin{enumerate}
\item[(i)] $x^{a-3}$ lies in  $\tau(f^{1-\frac{1}{p^n}})$. 
\item[(ii)] $x^{a-3}$ does not lie in $\tau(f^{1-\frac{1}{p^{n+1}}})$.
\end{enumerate}

We first prove (i). It follows from (\ref{eq_description_test})
 that $\tau(f^{1-\frac{1}{p^n}})=(f^{p^n-1})^{[1/p^n]}$. Note that $a$ divides $p^n-2^n$,
 and we consider the following term in the expansion of $f^{p^n-1}$
\begin{align}
& \binom{p^n-1}{p^n-1-\frac{2}{a}(p^n-2^n)}\binom{\frac{2}{a}(p^n-2^n)}{\frac{1}{a}(p^n-2^n)}(x^a)^{p^n-1-\frac{2}{a}(p^n-2^n)}(y^a)^{\frac{1}{a}(p^n-2^n)}(z^a)^{\frac{1}{a}(p^n-2^n)}\notag\\
&=\binom{p^n-1}{p^n-1-\frac{2}{a}(p^n-2^n)}\binom{\frac{2}{a}(p^n-2^n)}{\frac{1}{a}(p^n-2^n)} x^{(a-3)p^n+p^n-1}y^{p^n-2^n}z^{p^n-2^n}\notag
\end{align}
It follows from the description for $(f^{p^e-1})^{[1/p^e]}$ given in (\ref{formula_test_ideal})
that if the two binomial coefficients in the above expression are not zero,
then (i) holds. Remark~\ref{criterion-vanishing-binom}
 implies that
${{p^n-1}\choose{i}}$ is not zero in ${\mathbf F}_p$ for every $i$ with $0\leq i\leq p^n-1$.
In particular, we have
$\binom{p^n-1}{p^n-1-\frac{2}{a}(p^n-2^n)}\neq 0$. For the other binomial coefficient, we write
\[\frac{2}{a}(p^n-2^n)=\sum^{n-1}_{j=0}\frac{2^{j+1}(p-2)}{a}p^{n-1-j};\ \frac{1}{a}(p^n-2^n)=\sum^{n-1}_{j=0}\frac{2^{j}(p-2)}{a}p^{n-1-j}.\]
Since $a=2^{n+1}-1$, we see that $\frac{2^{j+1}(p-2)}{a}<p$. Using  
again Remark \ref{criterion-vanishing-binom}, we deduce that 
$\binom{\frac{2}{a}(p^n-2^n)}{\frac{1}{a}(p^n-2^n)}\neq 0$, which completes the proof of (1).

We now prove (ii). It follows from the description of $\tau(f^{1-\frac{1}{p^{n+1}}})=
(f^{p^{n+1}-1})^{[1/p^{n+1}]}$
given by (\ref{formula_test_ideal}) that if $x^{a-3}$ lies in this ideal, then we have
a monomial $x^{ar}y^{as}z^{at}$ that appears with nonzero coefficient in the expansion
of $f^{p^{n+1}-1}$ such that 
$0\leq ar-(a-3)p^{n+1}\leq p^{n+1}-1$, $as\leq p^{n+1}-1$, and $at\leq p^{n+1}-1$.
In this case we have
$$as\leq p^{n+1}-1\ \Rightarrow\ s\leq \frac{p^{n+1}-1}{a}=\frac{p^{n+1}-2^{n+1}}{a}+\frac{2^{n+1}-1}{a}\ \Rightarrow\ s\leq \frac{p^{n+1}-2^{n+1}}{a},$$
where the last inequality holds since $a=2^{n+1}+1$ and $s$ is an integer. We similarly have 
$t\leq \frac{p^{n+1}-2^{n+1}}{a}$. Therefore
$$ar=a(p^{n+1}-1-s-t)\geq a(p^{n+1}-1)-a\cdot \frac{2}{a}(p^{n+1}-2^{n+1})=(a-2)p^{n+1}+(2^{n+2}-a)\geq (a-2)p^{n+1},$$
a contradiction.
We thus conclude that
$x^{a-3}\notin  \tau(f^{1-\frac{1}{p^{n+1}}})$, proving (ii).
\end{proof}

\begin{remark}
In \cite[Problem 1.05]{AIMworkshop}, Katzman asks the following question: given
$f\in\ZZ[x_1,\ldots,x_n]$, if $\alpha_p$ denotes the HSL number of
the injective hull of the residue field of ${\mathbf F}_p\llbracket x_1,\ldots,x_n\rrbracket/(f_p)$ with respect to the
natural $p$-linear structure $\Theta$, is $\limsup_{p\to\infty}\alpha_p=1$? 
Note that Proposition \ref{no-upper-bound-HSL} gives a negative
answer to this question. Indeed, since $2$ and $2^{n+1}+1$ are relatively prime, according to Dirichlet's theorem there are infinitely many prime numbers $p$ such that $p\equiv 2$ (mod $2^{n+1}+1$). Therefore Proposition~\ref{no-upper-bound-HSL} implies that given any $N$,
there is $f\in\ZZ[x_1,x_2,x_3]$ such that $\alpha_p>N$ for infinitely many primes $p$.

\end{remark}

\begin{example}\label{quintic}
Let $R=\mathbb{F}_p\llbracket x,y,z\rrbracket$ and $f=x^5+y^5+z^5$. In this case we have the following $F$-jumping numbers in $(0,1)$.
\begin{enumerate}
\item[(i)] When $p=2$ there are three $F$-jumping numbers in $(0,1)$: $\lambda_1=\frac{1}{4}$, $
\lambda_2=\frac{1}{2}$, and $ \lambda_3=\frac{3}{4}$.
\item[(ii)] When $p=3$ there are three $F$-jumping numbers in $(0,1)$: $\lambda_1=\frac{1}{3}$, $\lambda_2=\frac{2}{3}$, and $ \lambda_3=\frac{8}{9}$.
\item[(iii)] When $p=5$ there are four $F$-jumping numbers in $(0,1)$: $\lambda_1=\frac{1}{5}$, $\lambda_2=\frac{2}{5}$, $ \lambda_3=\frac{3}{5}$, and $\lambda_4=\frac{4}{5}$.
\item[(iv)] When $p\equiv 1$ (mod $5$), there are two $F$-jumping numbers in $(0,1)$: $\lambda_1=\frac{3}{5}$ and $\lambda_2=\frac{4}{5}$.
\item[(v)] When $p\equiv 2$ (mod $5$) and $p>2$ there are four $F$-jumping numbers in $(0,1)$: $\lambda_1=\frac{3}{5}-\frac{1}{5p}, \lambda_2=\frac{4}{5}-\frac{3}{5p}$, $\lambda_3=1-\frac{1}{p}$ and $ \lambda_4=1-\frac{1}{p^2}$.
\item[(vi)]  When $p\equiv 3$ (mod $5$) and $p>3$ there are four $F$-jumping numbers in $(0,1)$: $\lambda_1=\frac{3}{5}-\frac{4}{5p}, \lambda_2=\frac{4}{5}-\frac{2}{5p}$, $ \lambda_3=1-\frac{1}{p}$, and $\lambda_4=1-\frac{1}{p^2}$.
\item[(vii)]  When $p\equiv 4$ (mod $5$) there are three $F$-jumping numbers in $(0,1)$: 
$\lambda_1=\frac{3}{5}-\frac{7}{5p}$, $\lambda_2=\frac{4}{5}-\frac{6}{5p}$ and $ \lambda_3=1-\frac{1}{p}$.
\end{enumerate}
\end{example}

\begin{proof}
Since the calculations are a somewhat tedious, we only give the proof of (v).
The other cases are proved by entirely similar arguments.

It is straightforward to check that $\tau(f^{\frac{3}{5}-\frac{1}{5p}})=(x,y,z)$. We 
first show that $\tau(f^{\frac{3}{5}-\frac{1}{5p}-\frac{1}{p^t}})=R$ for all integers $t\geq 3$.
Note that
$\tau(f^{\frac{3}{5}-\frac{1}{5p}-\frac{1}{p^t}})=(f^{(3a+1)p^{t-1}-1})^{[1/p^t]}$.
We consider the following monomial that appears in the expansion of $f^{(3a+1)p^{t-1}-1}$:
\begin{align}
&(x^5)^{(3a+1)p^{t-1}-1-(2ap^{t-1}+4ap^{t-2}+4ap^{t-3})}(y^5)^{ap^{t-1}+2ap^{t-2}+2ap^{t-3}}(z^5)^{ap^{t-1}+2ap^{t-2}+2ap^{t-3}}\notag\\
&=x^{p^t-p^{t-3}(p^2-4p-8)-5}y^{p^t-2p^{t-2}-4p^{t-3}}z^{p^t-2p^{t-2}-4p^{t-3}}\notag
\end{align}
with coefficient $\binom{(3a+1)p^{t-1}-1}{2ap^{t-1}+4ap^{t-2}+4ap^{t-3}}\binom{2ap^{t-1}+4ap^{t-2}+4ap^{t-3}}{ap^{t-1}+2ap^{t-2}+2ap^{t-3}}$.
It follows from Remark \ref{criterion-vanishing-binom} that the two binomial coefficients are nonzero, and this implies that $1\in \tau(f^{\frac{3}{5}-\frac{1}{5p}-\frac{1}{p^t}})$. We have thus proved that the $F$-pure threshold of $f$ is  $\lambda_1=\frac{3}{5}-\frac{1}{5p}$.

Note that $\frac{4}{5}-\frac{3}{5p}=\frac{4a+1}{p}$ and it is easy to see that
 $\tau(f^{\frac{4}{5}-\frac{3}{5p}})=(f^{4a+1})^{[1/p]}=(x,y,z)^2$. We next show that $\tau(f^{\frac{4}{5}-\frac{3}{5p}-\frac{1}{p^t}})=(x,y,z)$ for all $t\geq 3$. We have  $\tau(f^{\frac{4}{5}-\frac{3}{5p}-\frac{1}{p^t}})=(f^{(4a+1)p^{t-1}-1})^{[1/p^t]}$. Consider the term in the expansion of 
 $f^{(4a+1)p^{t-1}-1}$
\[\binom{(4a+1)p^{t-1}-1}{2i}\binom{2i}{i}(x^5)^{(4a+1)p^{t-1}-1-2i}(y^5)^i(z^5)^i,\]
with $i=ap^{t-1}+2ap^{t-2}+2ap^{t-3}$. Note that $5i=p^t-2p^{t-2}-4p^{t-3}<p^t$ and 
\[p^t< 5((4a+1)p^{t-1}-1-2i)=2p^t-p^{t-3}(3p^2-4p-8)-1 <p^{2t}.\]
 Remark \ref{criterion-vanishing-binom} implies that $\binom{(4a+1)p^{t-1}-1}{2i}\binom{2i}{i}\neq 0$, and we conclude that $x\in \tau(f^{\frac{4}{5}-\frac{3}{5p}-\frac{1}{p^t}}) $. By symmetry, we also get  $y,z\in \tau(f^{\frac{4}{5}-\frac{3}{5p}-\frac{1}{p^t}})$. Therefore $\tau(f^{\frac{4}{5}-\frac{3}{5p}-\frac{1}{p^t}})=(x,y,z)$ for all $t\geq 1$. This shows that $\lambda_2=\frac{4}{5}-\frac{3}{5p}$ is indeed the second $F$-jumping number.

We now show that $\lambda_3=1-\frac{1}{p}$ is the third $F$-jumping number. We first prove that
\begin{equation}\label{eq_test_ideal20}
\tau(f^{1-\frac{1}{p}})=(f^{5a+1})^{[1/p]}=(x^2,y^2,z^2,xyz).
\end{equation}
To this end we consider $(x^5+y^5+z^5)^{5a+1}$. Since the exponent is less than $p$, none of the binomial coefficients that appear in the expansion of $(x^5+y^5+z^5)^{5a+1}$ is zero. The monomial $(x^5)^{3a+1}(y^5)^a(z^5)^a=x^{3p-1}y^{p-2}z^{p-2}$ shows that $x^2$
lies in $\tau(f^{1-\frac{1}{p}})$ (and by symmetry, so do $y^2$ and $z^2$). The monomial 
$(x^5)^{a+1}(y^5)^{2a}(z^5)^{2a}=x^{p+3}y^{2p-4}z^{2p-4}$ shows that $xyz$ lies in
$\tau(f^{1-\frac{1}{p}})$. We thus have $(x^2,y^2,z^2,xyz)\subseteq
\tau(f^{1-\frac{1}{p}})$.
In order to prove the reverse inclusion, it is enough to show that for every monomial
of the form 
$x^{5i}y^{5j}z^{5k}$, with $i+j+k=p-1$, we can not have $5i, 5j \leq 2p-1$ and $5k\leq p-1$.
Note that if these conditions hold, since $p\equiv 2$ (mod $5$), we have in fact
$5i, 5j\leq 2p-4$ and $5k\leq p-2$, hence $5p-5=5i+5j+5k\leq 5p-10$, a contradiction. This completes the proof of (\ref{eq_test_ideal20}).

To show that $1-\frac{1}{p}$ is the third $F$-jumping number, it suffices to prove that 
$xy$, $yz$, and $xz$ lie in  $\tau(f^{1-\frac{1}{p}-\frac{1}{p^t}})$ for all $t\geq 1$. 
Note that $\tau(f^{1-\frac{1}{p}-\frac{1}{p^t}})=(f^{p^t-p^{t-1}-1})^{[1/p^t]}$. Let us consider the
following term in the expansion of $f^{p^t-p^{t-1}-1}$:
\[\binom{p^t-p^{t-1}-1}{2i}\binom{2i}{i}(x^5)^{p^t-p^{t-1}-1-2i}(y^5)^i(z^5)^i,\]
with $i=2ap^{t-1}+2a\sum^{t-2}_{j=0}p^j$. It is easy to see that
\[p^t<5i<2p^t\,\, \text{and}\,\,0<5(p^t-p^{t-1}-1-2i)<p^t.\]
Remark \ref{criterion-vanishing-binom} implies that $\binom{p^t-p^{t-1}-1}{2i}\binom{2i}{i}\neq 0$, and we conclude that $yz$ lies in $\tau(f^{1-\frac{1}{p}-\frac{1}{p^t}})$ (and so do 
$xy$ and $xz$, by symmetry). This proves that $1-\frac{1}{p}$ is the third $F$-jumping number.

Our next goal is to show that $\lambda_4=1-\frac{1}{p^2}$ is the fourth $F$-jumping number. Let us first prove that 
\[\tau(f^{1-\frac{1}{p^2}})=(x,y,z)^3.\] 
Note that $\tau(f^{1-\frac{1}{p^2}})=(f^{p^2-1})^{[1/p^2]}$. By considering the nonzero term
in the expansion of $f^{p^2-1}$
$$\binom{p^2-1}{p^2-1-2i}
\binom{2i}{i}(x^5)^{p^2-1-2i}(y^5)^i(z^5)^i,\,\,\text{with}\,\,i=a(p+2),$$
we deduce $x^3$ lies in $\tau(f^{1-\frac{1}{p^2}})$ (and so do $y^3$ and $z^3$, by symmetry). 
Similarly, by considering the nonzero term
$$\binom{p^2-1}{p^2-1-(j+k)}\binom{j+k}{j}(x^5)^{p^2-1-(j+k)}(y^5)^j(z^5)^k,\,\,\text{with}\,\,j=2a(p+1),\,k=a(p+1)$$ we obtain that $x^2y$ lies $\tau(f^{1-\frac{1}{p^2}})$ (and 
by symmetry, so do $xy^2$, $y^2z$, $yz^2$, $x^2z$, and $xz^2$). Finally, the nonzero term 
$$\binom{p^2-1}{2i}\binom{2i}{i}(x^5)^{p^2-1-2i}(y^5)^i(z^5)^i,\,\,\text{with}\,\,i=2a(p+1)$$ implies that $xyz$ is contained in $\tau(f^{1-\frac{1}{p^2}})$, and therefore
$(x,y,z)^3\subseteq\tau(f^{1-\frac{1}{p^2}})$.

In order to check the reverse inclusion, it is enough to show that for every monomial
of the form 
$x^{5i}y^{5j}z^{5k}$, with $i+j+k=p^2-1$, the following hold:
\begin{enumerate}
\item[(a)] If two of $5i$, $5j$, and $5k$ are $\leq p^2-1$, then the third one is $\geq 3p^2$.
\item[(b)] If two of $5i$, $5j$, and $5k$ are $\leq 2p^2-1$, then the third one is $\geq p^2$.
\end{enumerate}
For (a), note that if $5i,5j\leq p^2-1$, then since $p\equiv 2$ (mod $5$), we have in fact $5i, 5j\leq p^2-4$, hence $5k\geq 5(p^2-1)-2(p^2-4)=3p^2+3$. We argue similarly for (b): if 
$5i, 5j\leq 2p^2-1$, then in fact $5i, 5j\leq 2p^2-3$, and therefore
$5k\geq 5(p^2-1)-2(2p^2-3)=p^2+1$. We have thus shown that
$\tau(f^{1-\frac{1}{p^2}})=(x,y,z)^3$. 

In order to complete the proof of the fact that $\lambda_4$ is the fourth $F$-jumping number, 
it is enough to show that $x^2, y^2,z^2 \in \tau(f^{1-\frac{1}{p^2}-\frac{1}{p^t}})$ for all $t\geq 3$. Note that $\tau(f^{1-\frac{1}{p^2}-\frac{1}{p^t}})=(f^{p^t-p^{t-2}-1})^{[1/p^t]}$. It is clear that
\[p^t-p^{t-2}-1=(5a+1)p^{t-1}+5ap^{t-2}+(5a+1)\sum^{t-3}_{j=0}p^j.\]
We consider the term
\[\binom{p^t-p^{t-2}-1}{2i}\binom{2i}{i}(x^5)^{p^t-p^{t-2}-1-2i}(y^5)^i(z^5)^i,\]
where $i=ap^{t-1}+2ap^{t-2}+2a\sum^{t-3}_{j=0}p^j$.
It is straightforward to check that $5i<p^t$ and 
$2p^t < 5(p^t-p^{t-2}-1-2i) < 3p^t$.
Since it follows from Remark \ref{criterion-vanishing-binom} that $\binom{p^t-p^{t-2}-1}{2i}\binom{2i}{i} \neq 0$, we conclude that
 $x^2$ lies in  $\tau(f^{1-\frac{1}{p^2}-\frac{1}{p^t}})$ (and so do $y^2$ and $z^2$, by symmetry). This shows that $1-\frac{1}{p^2}$ is the fourth $F$-jumping number.   

In order to show that there is no $F$-jumping number in $(1-\frac{1}{p^2},1)$, we need to prove that $(x,y,z)^3\subseteq \tau(f^{1-\frac{1}{p^t}})$ for all $t\gg 0$. It is clear that 
$$p^t-1=(p-1)\sum^{t-1}_{j=0}p^j=(5a+1)\sum^{t-1}_{j=0}p^j.$$
 The following nonzero term in the expansion of $f^{p^t-1}$:
 $$\binom{p^t-1}{2i}\binom{2i}{i}(x^5)^{p^t-1-2i}(y^5)^i(z^5)^i,\,\,\text{with}\,\,i=a\sum^{t-1}_{j=0}p^j$$ shows that $x^3$ lies in $\tau(f^{1-\frac{1}{p^t}})$ (and so do $y^3$ and $z^3$, by symmetry). The nonzero term
 $$\binom{p^t-1}{j+k}\binom{j+k}{j}(x^5)^{p^t-1-(j+k)}(y^5)^j(z^5)^k,\,\,\text{with}\,\,j=2a\sum^{t-1}_{n=0}p^n,\,k=a\sum^{t-1}_{n=0}p^n$$ shows that $x^2y$ lies in $\tau(f^{1-\frac{1}{p^t}})$ (so do $xy^2$, $x^2z$, $xz^2$, $y^2z$, and $yz^2$, by symmetry). Finally, the 
 nonzero term 
 $$\binom{p^t-1}{2i}\binom{2i}{i}(x^5)^{p^t-1-2i}(y^5)^i(z^5)^i,\,\,\text{with}\,\,i=2a\sum^{t-1}_{n=0}p^n$$ shows that $xyz$ lies in $\tau(f^{1-\frac{1}{p^t}})$. This proves that there 
 are no  $F$-jumping numbers in the interval $(1-\frac{1}{p^2},1)$.
\end{proof}

\begin{remark}
Let $R$ and $f$ be as in Example~\ref{quintic}.
The description of the $F$-jumping numbers in this example allows us 
to compute via Corollary~\ref{connection-HSL-number-juming-number}
the HSL number of $(E_{R/(f)},\Theta)$. We see that this is equal to $2$ if
$p\equiv 2$ or $3$ (mod $5$), and it is equal to $1$, otherwise.
\end{remark}

\bigskip
\bigskip

\providecommand{\bysame}{\leavevmode \hbox \o3em
{\hrulefill}\thinspace}

\end{document}